\newcommand{\subf}[2]{%
	{\small\begin{tabular}[t]{@{}c@{}}
			#1\\#2
	\end{tabular}}%
}
\numberwithin{equation}{section} \numberwithin{figure}{section}
{\theoremstyle{remark} \newtheorem{remark}{Remark}}
\newtheorem{definition}{Definition}[section]
\newtheorem{proposition}{Proposition}[section]
\newtheorem{theorem}{Theorem}[section]
\newtheorem{corollary}{Corollary}[section]
\newtheorem{lemma}{Lemma}[section]
\newcommand\Laps{(-\Delta)^s}
\newcommand\Ns{\mathcal{N}_s}
\newcommand\pp{\partial}
\newcommand{\x}{\texttt{x}}
\newcommand{\eps}{\varepsilon}
\def\calT{\mathcal{T}}
\def\calF{\mathcal{F}}
\newcommand{\V}{\mathbb{V}}
\newcommand{\X}{\mathbb{X}}
\newcommand{\N}{\mathbb{N}}
\newcommand{\R}{\mathbb{R}}
\newcommand{\Rd}{{\mathbb{R}^d}}
\newcommand{\phii}{\varphi}
\DeclareMathOperator\supp{supp}
\newcommand{\fran}[1]{{\color{black}#1}}
\begin{document}

\title{Finite element approximation of fractional Neumann problems}

\author[J.P.~Borthagaray]{Juan Pablo~Borthagaray}
\address[J.P.~Borthagaray]{Departamento de Matem\'atica y
Estad\'istica del Litoral, Universidad de la Rep\'ublica, Salto,
Uruguay}
\email{jpborthagaray@unorte.edu.uy}

\author[F.M.~Bersetche]{Francisco M.~Bersetche}
\address[F.M.~Bersetche]{}
\email{bersetche@gmail.com}

\maketitle

\begin{abstract}
{In this paper we consider approximations of Neumann problems for the integral fractional Laplacian by continuous, piecewise linear finite elements. We analyze the weak formulation of such problems, including their well-posedness and asymptotic behavior of solutions. We address the convergence of the finite element discretizations and discuss the implementation of the method. Finally, we present several numerical experiments in one- and two-dimensional domains that illustrate the method's performance as well as certain properties of solutions.}
\end{abstract}

\section{Introduction and problem setting}

Let $\Omega \subset \R^d$ be a bounded Lipschitz domain, $s \in (0,1)$, $\alpha \ge 0$, and two given functions $f \colon \Omega \to \R$ and $g \colon \Omega^c \to \R$, where $\Omega^c = \Rd \setminus \overline\Omega$. In this work, we propose and study the convergence of a finite element scheme for the following problem: find $u \colon \R^d \to \R$ such that
\begin{equation} \label{eq:Neumann}
\left\lbrace \begin{array}{rl}
\Laps u + \alpha u = f & \mbox{in } \Omega, \\
\Ns u = g & \mbox{in } \Omega^c.
\end{array} \right.
\end{equation}

Above, $\Laps$ denotes the integral fractional Laplacian of order $s$,
\begin{equation}
\label{eq:defofLaps}
\Laps v(x) := C_{d,s} \mbox{ p.v. } \int_\Rd \frac{v(x)-v(y)}{|x-y|^{d+2s}} \, dy, \quad C_{d,s} :=  \frac{2^{2s}s\Gamma\left(s+\frac{d}{2}\right)}{\pi^{d/2}\Gamma(1-s)} .
\end{equation}
and $\Ns$ is the nonlocal Neumann operator
\begin{equation}
\label{eq:defofNs}
\Ns v (x) := C_{d,s} \int_\Omega \frac{v(x)-v(y)}{|x-y|^{d+2s}} \, d y.
\end{equation}

The fractional Laplacian $\Laps$ is a nonlocal operator: the evaluation of  $\Laps v (x)$ at any point $x \in \Omega$ involves the values of $v$ at the whole space $\R^d$. Therefore, boundary conditions in problem \eqref{eq:Neumann} need to be imposed on the complement of $\Omega$. The operator $\Ns$ depends on the domain $\Omega$, and can be interpreted as a nonlocal flux density between $\Omega^c$ and $\Omega$. We remark that there is no widely accepted definition of a Neumann condition for operator \eqref{eq:defofLaps} and refer the interested reader to \cite[Section 2.3.2]{review} and to \cite[Section 7]{DiRoVa17} for discussion on this aspect. The definition that we are using in this manuscript, that was proposed in \cite{DiRoVa17,Gunz2}, gives rise to the following integration by parts formula.

\begin{proposition}[integration by parts formula \cite{DiRoVa17, DuGuLeZh13}] Let $u, v \colon \R^d \to \R$ be  smooth enough functions, then
	\begin{equation}\label{eq:parts}
	\begin{aligned}
	\frac{C_{d,s}}{2} \iint_{(\R^d \times \R^d) \setminus (\Omega^c \times \Omega^c)} &  \frac{(u(x)-u(y))  (v(x)-v(y))}{|x-y|^{d+2s}} \, dx \, dy \,  \\ 
	& = \int_\Omega  v(x) (-\Delta)^su(x) \, dx + \int_{\Omega^c} v(x) \,  \Ns u(x) \, dx .
	\end{aligned} \end{equation}
\end{proposition}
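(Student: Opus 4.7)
The plan is to exploit the symmetry of the integrand under swapping $x$ and $y$, split the domain of integration on the left-hand side into three symmetric pieces, and then rearrange terms to recognize $\Laps u$ and $\Ns u$.

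First I would decompose
\[
(\R^d \times \R^d) \setminus (\Omega^c \times \Omega^c) = (\Omega \times \Omega) \cup (\Omega \times \Omega^c) \cup (\Omega^c \times \Omega).
\]
Since the kernel $K(x,y) = (u(x)-u(y))(v(x)-v(y))/|x-y|^{d+2s}$ satisfies $K(x,y)=K(y,x)$, the integrals over $\Omega \times \Omega^c$ and $\Omega^c \times \Omega$ are equal, so the LHS can be written as
\[
\frac{C_{d,s}}{2}\iint_{\Omega \times \Omega} K(x,y)\,dx\,dy \; + \; C_{d,s}\iint_{\Omega \times \Omega^c} K(x,y)\,dx\,dy .
\]

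Next I would expand each remaining term. For the diagonal piece $\Omega \times \Omega$, using the identity $(u(x)-u(y))(v(x)-v(y)) = 2v(x)(u(x)-u(y)) + (\text{antisymmetric remainder})$ and renaming $x \leftrightarrow y$ in the remainder, one obtains
\[
\frac{C_{d,s}}{2}\iint_{\Omega \times \Omega} K \,dx\,dy = C_{d,s}\int_\Omega v(x) \int_\Omega \frac{u(x)-u(y)}{|x-y|^{d+2s}}\,dy\,dx,
\]
understood as a principal value. For the cross piece, writing $v(x)-v(y) = v(x) + (-v(y))$ and exchanging the roles of $x$ and $y$ in the second half yields
\[
C_{d,s}\iint_{\Omega \times \Omega^c} K \,dx\,dy = C_{d,s}\int_\Omega v(x) \int_{\Omega^c} \frac{u(x)-u(y)}{|x-y|^{d+2s}}\,dy\,dx + \int_{\Omega^c} v(y)\, \Ns u(y)\,dy.
\]
Adding the two expressions, the $\int_\Omega$ contributions combine into an integral over $\R^d$ in the inner variable, producing $\int_\Omega v(x)\Laps u(x)\,dx$, and the $\int_{\Omega^c}$ term is already $\int_{\Omega^c} v\,\Ns u$.

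The main obstacle is dealing with the principal value singularity at $x=y$ when splitting and reassembling the integrals. The cleanest way is to carry out the whole argument with the inner integrals restricted to $\{|x-y|>\varepsilon\}$, where all manipulations are justified by Fubini because the kernel is bounded; the smoothness of $u$ and $v$ (say $C^2$ near $\overline\Omega$ with appropriate decay at infinity) then ensures that as $\varepsilon \to 0$ the symmetric singular contributions cancel, recovering both the p.v.\ defining $\Laps$ and the absolutely convergent integral defining $\Ns$. Once this limiting procedure is performed, the identity follows from the algebraic rearrangement described above.
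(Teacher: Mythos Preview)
The paper does not actually prove this proposition; it is only stated with attribution to \cite{DiRoVa17, DuGuLeZh13}. Your argument is correct and is essentially the standard proof that appears in those references: decompose the integration region, use the symmetry $K(x,y)=K(y,x)$ to merge the two cross pieces, expand $v(x)-v(y)$ in each remaining integral, and recombine the $\Omega$-contributions into the full $\R^d$ inner integral defining $\Laps u$ while the leftover $\Omega^c$ term is precisely $\int_{\Omega^c} v\,\Ns u$. Your remark about performing the manipulations on $\{|x-y|>\varepsilon\}$ and passing to the limit is the right way to make the principal-value step rigorous.
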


To better illustrate the nonlocal derivative operator we are dealing with, let us mention a probabilistic interpretation for \eqref{eq:defofNs}. Consider the fractional heat problem with homogeneous Neumann condition on $\Omega$. Namely, suppose $u: \Omega \times [0,T] \to \R$ satisfies 
\begin{equation} \label{eq:Neumann_parabolico}
\left\lbrace \begin{array}{rl}
u_t + \Laps u = 0 & \mbox{in } \Omega \times (0,T], \\
\Ns u = 0 & \mbox{in } \Omega^c\times (0,T],\\
u(\cdot,0) = u_0 & \mbox{in } \Omega,
\end{array} \right.
\end{equation}
for some $T>0$, and $u_0 \in L^2(\Omega)$. In this context, the function $u$ can be understood as the probability density of the position of a particle moving randomly inside $\Omega$ according to a random walk with arbitrarily long jumps. The condition $\Ns u= 0$ refers to how the particle behaves when it jumps outside the domain: if it reaches a point $y \in \Omega^c$ then it may immediately come back to any point $x \in \Omega$, with a probability density proportional to $1/|x-y|^{d+2s}$. 

Problem \eqref{eq:Neumann} has a variational structure, which mimics the one for the standard Laplacian. Actually, solutions to \eqref{eq:Neumann} are critical points of the functional
\begin{equation} \label{eq:energy}
\calF (v) = \frac{C_{d,s}}{4} \iint_{(\R^d \times \R^d) \setminus (\Omega^c \times \Omega^c)}  \frac{|v(x)-v(y)|^2}{|x-y|^{d+2s}} \, dx \, dy + 
\frac\alpha2 \|v\|_{L^2(\Omega)}^2 - \int_\Omega f v - \int_{\Omega^c} g v.
\end{equation}
Such critical points are minima: in case $\alpha > 0$ there is a unique minimizer, while if $\alpha = 0$ minimizers are uniquely defined up to an additive constant, and one requires a compatibility condition on the data in order to guarantee the existence of solutions. The well-posedness of problem \eqref{eq:Neumann} in case $\alpha = 0$ is studied in \cite{DiRoVa17}. Here we shall focus on the case $\alpha > 0$, although the finite element scheme we propose can be straightforwardly adapted to the former case.

In recent years, finite element methods have been proposed and studied for a variety of equations involving the fractional Laplacian \eqref{eq:defofLaps}, such as Dirichlet \cite{ABB,AcosBort2017fractional,AcBoHe19,AiGl17,BBNOS18,BoLePa19}, time-fractional evolution \cite{ABBp}, phase field  \cite{AB_AC,ains1,HongWang}, optimal control \cite{AnKhWa19,AnVeWa20,BiHS19,DEGlOt19,GO}, and obstacle  \cite{BoLeSa20,BoNoSa18,BuGu19,GiSt19} problems. Most of these references consider either Dirichlet or periodic boundary conditions; reference \cite{AnKhWa19} deals with Neumann and Robin conditions, but does not address the convergence of finite element discretizations of such problems. The recent preprint \cite{CaJaWa20} studies Neumann problems closely related to \eqref{eq:Neumann} in one-dimensional domains by means of finite difference schemes. However, it proves convergence by assuming solutions to be of class $C^4$, and such a condition cannot be guaranteed in general.

Indeed, a crucial aspect in the numerical analysis of differential equations is the regularity of solutions. Reference \cite{AuFNRO20} studies the H\"older regularity of solutions to \eqref{eq:Neumann} whenever $\alpha = 0$ and $g \equiv 0$. However, to the best of our knowledge, there are no Sobolev regularity estimates for Neumann problems involving the integral fractional Laplacian in the literature. For that reason, we aim to prove the convergence of the finite element discretizations without assuming regularity of solutions (cf. Theorem \ref{thm:convergence} below). Nevertheless, in our numerical experiments we have computed convergence rates whenever explicit solutions were available.

Throughout the paper we denote by $C$ any nonessential constant, and by $A \simeq B$ we mean that $A \le CB$ and $B \le CA$. Whenever we want to express the dependence of $C$ on $A$, we write it as $C_A$.

This manuscript has been organized in the following way. In Section \ref{sec:weakform} we set the weak formulation of problem \eqref{eq:Neumann}, prove a nonlocal trace theorem for functions in a suitable variational space, and derive asymptotic estimates for solutions. Section \ref{sec:FE} is devoted to the description of the finite element method, while its convergence is treated in Section \ref{sec:convergence} along with several interpolation estimates. Section \ref{sec:experiments} exhibits several numerical experiments. Not only do these experiments illustrate the convergence of the finite element discretizations but also their capability of capturing certain properties of solutions, such as limits at infinity and exponential convergence to the mean of the initial datum for the fractional heat equation with homogeneous Neumann conditions. Finally, Appendices \ref{app:g_construida} and \ref{sec:implementation} offer some details about the implementation of the method.

\section{Weak Formulation}
\label{sec:weakform}

The integration by parts formula \eqref{eq:parts} allows us to set a weak formulation for problem \eqref{eq:Neumann}. For that purpose, we first need to define a suitable variational space.

\begin{definition}[variational space] We set
	\begin{equation*} \label{eq:defofV}
	\V := \{ v : \R^d \to \R \mbox{ measurable } \colon \| v \|_{\V} < \infty  \},
	\end{equation*}
	where 
	\begin{equation} \label{eq:norm_V}
	\| v \|_{\V} := \left( \| \fran{v} \|_{L^2(\Omega)}^2 + | \fran{v} |_{\X}^2  \right)^{1/2},
	\end{equation}
	and
	\begin{equation} \label{eq:norm_X}
	| \fran{v} |_{\X} := \left( \frac{C_{d,s}}{2} \iint_{(\R^d \times \R^d) \setminus (\Omega^c \times \Omega^c)}  \frac{|\fran{v}(x)-\fran{v}(y)|^2}{|x-y|^{d+2s}} \, dx \, dy \right)^{1/2}.
	\end{equation}
\end{definition}

The space $\V$ introduced above \fran{is motivated by \cite{DiRoVa17} and coincides with the space $H^s_{\Omega,0}$ in that reference}. In particular, from \cite[Proposition 3.1]{DiRoVa17}, it follows that $\V$ is a Hilbert space. We shall denote by $\langle \cdot , \cdot \rangle_{\X} \colon \X \times \X \to \R$ the bilinear form
\begin{equation*} \label{eq:def-pintX}
\langle u , v \rangle_{\X} := \frac{C_{d,s}}{2} \iint_{(\R^d \times \R^d) \setminus (\Omega^c \times \Omega^c)}  \frac{(u(x)-u(y))  (v(x)-v(y))}{|x-y|^{d+2s}} \, dx \, dy
\end{equation*} 
and by $(\cdot, \cdot)_{L^2(\Omega)}$ the standard inner product in $L^2(\Omega)$ or any duality pairing using $L^2(\Omega)$ as pivot space. The variational space $\V$ is also related to fractional-order Sobolev spaces; when necessary, we shall adopt the notation from \cite{ABB} regarding such spaces. \fran{We point out that, unlike the fractional Sobolev space $H^s(\Omega)$, the space $\mathbb{V}$ takes into account interactions between $\Omega$ and $\Omega^c$; moreover, unlike the space $H^s(\Rd)$, the space $\mathbb{V}$ does not take into account interactions between $\Omega^c$ and $\Omega^c$.}

Using the variational space and notation we have just introduced and \eqref{eq:parts}, the weak formulation of our problem reads as follows: find $u \in \V$ such that
\begin{equation} \label{eq:weak_form}
\begin{aligned}
\langle u , v \rangle_{\X} + \alpha (u,v)_{L^2(\Omega)} =
(f,v)_{L^2(\Omega)} + (g,v)_{L^2(\Omega^c)} \quad \forall v \in \V.
\end{aligned} \end{equation}

In order to study the well-posedness of this weak formulation, we need to make sense of the right hand side in \eqref{eq:weak_form}. Specifically, we need some control of the behavior in $\Omega^c$ of functions in $\V$; we shall accomplish this by proving an inequality in the spirit of a nonlocal trace theorem. 

It seems clear from \eqref{eq:norm_V} and \eqref{eq:norm_X} that one cannot hope to have control of \fran{the smoothness of a function within $\Omega^c$} in terms of \fran{its} $\V$-norm. Thus, one might try instead to bound a $L^p(\Omega^c)$-norm in terms of the $\V$-norm. However, because $|\Omega^c| = \infty$ and any constant function belongs to $\V$, it is apparent that one cannot expect the inequality 
$
\| v \|_{L^p(\Omega^c)} \le C \| v \|_\V
$
to hold for any $1\le p < \infty$.

\begin{remark}[blow up at infinity] \label{rmk:blow-up}
	Given a fixed number $R >\mbox{diam}(\Omega)$ let us define
	\begin{equation} \label{eq:defofLambda}
	\Lambda_R := \{ x \in \R^d \colon d(x,\pp\Omega) \le R \} 
	\end{equation}
	and consider a smooth, locally bounded function $v \colon \R^d \to \R$ such that
	\begin{equation} \label{eq:defofv}
	v \equiv 1 \mbox{ in } \Lambda_R , \qquad v(x) \simeq |x|^\alpha  \mbox{ in } \Lambda_R^c,
	\end{equation}
	for some $\alpha \in (0,s)$. Then, exploiting that
	\begin{equation} \label{eq:decay-Lambda-Omega}
	\int_\Omega \frac{1}{|x-y|^{d+2s}} dx \simeq |y|^{-d-2s} \quad \mbox{for  } y \in \Lambda_R^c
	\end{equation}
	and the equivalence 
	\[
	\int_{\Lambda_R^c} |y|^{-d - 2(s-\alpha)} dy \simeq  R^{- 2(s-\alpha)},
	\]
	which follows by integration in polar coordinates, we obtain
    \[
	|v|_\X^2 \le \fran{C \left( 1 +  \iint_{\Omega \times \Lambda_R^c} \frac{|v(y)|^2}{|x-y|^{d+2s}} dy dx \right)} \le 
	C \left( 1 +  \int_{\Lambda_R^c} |y|^{-d - 2(s-\alpha)} dy \right) < \infty.
	\]
	In consequence, this function satisfies $v \in \V$, \fran{although it is unbounded at infinity.}
\end{remark}

It seems therefore natural to consider weighted norms, that allow functions to have some growth at infinity. We consider the following spaces.

\begin{definition}
	Let $p \in \fran{[1, \infty]}$ and $\gamma \in \R$. Then, we define the space
	\[
	L^p_{\gamma}(\Rd) := \left\{ v \colon \Rd \to \R  \mbox{ measurable } \colon  \| v \|_{L^p_{\gamma}(\Rd)} < \infty \right\}, 
	\]
	where
	\[
	\| v \|_{L^p_{\gamma}(\Rd)} := \left\lbrace \begin{array}{ll} \left(\int_\Rd \frac{|v(x)|^p}{1+|x|^{d+\gamma p}} \, dx \right)^{1/p} & \mbox{if } 1 \le p < \infty , \\
	\sup_{x \in \Rd} \frac{|v(x)|}{1+|x|^\gamma} & \mbox{if } p = \infty.
	\end{array} \right.
	\]
\end{definition}

\begin{remark}[relations between the spaces $L^p_\gamma(\Rd)$] From the definition above, it follows immediately that $L^p_{\gamma_1}(\Rd) \subset L^p_{\gamma_2}(\Rd)$ if $\gamma_1 \le \gamma_2$. Also, an application of H\"older's inequality gives that if $p_1 > p_2$ and $\gamma_1 < \gamma_2$, then $L^{p_1}_{\gamma_1}(\Rd) \subset L^{p_2}_{\gamma_2}(\Rd)$.
\end{remark}

Let us focus on the exponent $p=2$. Remark \ref{rmk:blow-up} guarantees that, in order to have $\V \subset L^2_\gamma(\Rd)$, the weight exponent $\gamma$ cannot be too small. We now make more precise such an assertion.

\begin{lemma}[admissible exponents]
	If $\gamma < s$, then $\V \not\subseteq L^2_\gamma(\Rd)$.
\end{lemma}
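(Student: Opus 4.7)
The plan is to produce a concrete counterexample by reusing (with a minor adjustment) the function constructed in Remark \ref{rmk:blow-up}, which already shows that elements of $\V$ may grow polynomially at infinity.

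More precisely, given $\gamma < s$, I would pick an exponent $\alpha$ with $\max\{0,\gamma\} < \alpha < s$ (possible since $\gamma < s$), and build a smooth function $v\colon\R^d \to \R$ satisfying
\[
v \equiv 1 \text{ in } \Lambda_R, \qquad v(x) \simeq |x|^{\alpha} \text{ in } \Lambda_R^c,
\]
exactly as in \eqref{eq:defofv}. The argument of Remark \ref{rmk:blow-up} already yields $v \in \V$, so nothing new needs to be shown on that side.

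The remaining step is the quantitative lower bound for the weighted norm. Using polar coordinates and the asymptotic behavior of $v$,
\[
\| v \|_{L^2_\gamma(\Rd)}^2 \ge c \int_{\Lambda_R^c} \frac{|x|^{2\alpha}}{1+|x|^{d+2\gamma}}\, dx \simeq \int_R^\infty r^{2\alpha - 1 - 2\gamma}\, dr,
\]
and since $\alpha > \gamma$ the exponent $2\alpha - 1 - 2\gamma \ge -1$, making the integral divergent. Hence $v \notin L^2_\gamma(\Rd)$, which proves $\V \nsubseteq L^2_\gamma(\Rd)$.

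There is no real obstacle here: the construction of $v$ is already provided, and the only new ingredient is the polar-coordinate estimate showing divergence of the weighted integral precisely when $\alpha \ge \gamma$. The choice of $\alpha$ strictly between $\gamma$ and $s$ is what couples the two computations and pins down the sharp admissible range $\gamma \ge s$.
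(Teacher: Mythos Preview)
Your proposal is correct and follows essentially the same approach as the paper: both exhibit the function from Remark~\ref{rmk:blow-up} as a counterexample and check divergence of the weighted integral in polar coordinates. The only difference is that the paper sets $\alpha = \gamma$ exactly (yielding the borderline integrand $|x|^{-d}$), whereas your choice $\alpha > \max\{0,\gamma\}$ is slightly more careful in guaranteeing $\alpha \in (0,s)$ as required by the construction in \eqref{eq:defofv}.
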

\begin{proof}
	Let $\gamma < s$ and set $\alpha = \gamma < s$. We take a function $v$ as in \eqref{eq:defofv}, which satisfies $v \in \V$. However, our choice of $\alpha$ trivially yields
	\[
	\fran{\| v \|^2_{L^2_\gamma(\Rd)}} \ge C \int_{\Lambda_R^c} |x|^{-d} dx.
	\]
	Because the integral in the right hand side above is divergent, $v \notin L^2_\gamma(\Rd)$.
\end{proof}

The following trace-type inequality asserts that the value $\gamma = s$ is indeed critical.

\begin{proposition}[trace-type inequality] \label{prop:trace}
	Let $\gamma \ge s$. There exists a constant $C > 0$ such that, for all $v \in \V$,
	\begin{equation} \label{eq:trace}
	\| v \|_{L^2_\gamma(\Rd)} \le C \| v \|_{\V} .
	\end{equation}
	Thus, the embedding $\V \subset L^2_\gamma(\Rd)$ is continuous for all $\gamma \ge s$.
\end{proposition}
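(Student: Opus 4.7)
The plan is to split $\|v\|_{L^2_\gamma(\R^d)}^2$ into contributions over $\Omega$ and over $\Omega^c$. The first piece is controlled trivially, since the weight $(1+|x|^{d+2\gamma})^{-1}$ does not exceed $1$, so $\int_\Omega |v(x)|^2(1+|x|^{d+2\gamma})^{-1}\,dx \le \|v\|_{L^2(\Omega)}^2$. All the work therefore lies in estimating the exterior piece.

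To handle $\Omega^c$ I would use an averaging trick: for any $x\in\Omega^c$ and any $y\in\Omega$, the elementary bound $|v(x)|^2\le 2|v(x)-v(y)|^2+2|v(y)|^2$, averaged over $y\in\Omega$, yields
$$|v(x)|^2 \le \frac{2}{|\Omega|}\int_\Omega |v(x)-v(y)|^2\,dy + \frac{2}{|\Omega|}\|v\|_{L^2(\Omega)}^2.$$
Multiplying by the weight and integrating over $\Omega^c$ reduces matters to bounding two terms: (i) $\|v\|_{L^2(\Omega)}^2\int_{\Omega^c}(1+|x|^{d+2\gamma})^{-1}\,dx$, which is finite because $\gamma>0$; and (ii) the double integral $\iint_{\Omega^c\times\Omega}\frac{|v(x)-v(y)|^2}{1+|x|^{d+2\gamma}}\,dy\,dx$, which I would like to dominate by a constant times $|v|_\X^2$ upon noting that $\Omega^c\times\Omega$ is contained in the integration domain of \eqref{eq:norm_X}.

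The remaining ingredient is a pointwise kernel comparison: one needs $C>0$ such that $(1+|x|^{d+2\gamma})^{-1}\le C|x-y|^{-d-2s}$ for all $x\in\Omega^c$ and $y\in\Omega$, i.e., $|x-y|^{d+2s}(1+|x|^{d+2\gamma})^{-1}$ must be uniformly bounded on $\Omega^c\times\Omega$. Fix $R>0$ large enough that $\Omega\subset B_R$; for $|x|\ge 2R$ and $y\in\Omega$ one has $|x-y|\simeq|x|$, so the ratio behaves like $|x|^{2(s-\gamma)}$, which is bounded \emph{precisely} when $\gamma\ge s$. On the compact region $\{|x|\le 2R\}\cap\Omega^c$ both numerator and denominator are bounded above, so the ratio is also controlled there. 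Combining this comparison with step (ii) yields the desired inequality.

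The only delicate point in the whole argument is the identification of $\gamma=s$ as the critical exponent, which reflects the decay at infinity of the fractional Laplacian's kernel; once this is in view, the averaging trick and the kernel comparison are routine. The lemma proved immediately before the statement shows that the result is sharp within the scale $L^2_\gamma(\R^d)$.
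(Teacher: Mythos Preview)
Your proof is correct and follows essentially the same approach as the paper: average the pointwise inequality $|v(x)|^2\le 2|v(x)-v(y)|^2+2|v(y)|^2$ over $y\in\Omega$ and then compare the weight $(1+|x|^{d+2\gamma})^{-1}$ with the kernel $|x-y|^{-d-2s}$, using that $|x-y|\simeq|x|$ for $x$ far from $\Omega$ and that $|x-y|$ is bounded for $x$ near $\Omega$. The only organizational difference is that the paper splits $\R^d=\Lambda_R\cup\Lambda_R^c$ and treats the near and far regions separately, whereas you split $\R^d=\Omega\cup\Omega^c$ and absorb the near/far distinction into a single pointwise kernel comparison; your version is a bit more streamlined but the substance is identical.
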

\begin{proof}
	We split $\R^d = \Lambda_R \cup \Lambda_R^c$, and compute the $L^2$-norms on each subset separately. Let $x \in \Lambda_R$. Given $y \in \Omega$, because $|x-y| \le 3R$ we can write
	\[
	|v(x)|^2 \le 2 (3R)^{d+2s} \frac{|v(x) - v(y)|^2}{|x-y|^{d+2s}} + 2 |v(y)|^2.
	\]
	We integrate the inequality above over $\Lambda_R \times \Omega$ to obtain
	\[
	|\Omega| \int_{\Lambda_R} |v(x)|^2 dx \le C \left( R^{d+2s} \iint_{\Lambda_R \times \Omega}\frac{|v(x) - v(y)|^2}{|x-y|^{d+2s}} dy dx + R^d \int_{\Omega} |v(y)|^2 dy \right).
	\]
	
	Because $1+|x|^{d+2\gamma} \ge 1$, we deduce that
	\begin{equation} \label{eq:near_bound}
	\int_{\Lambda_R} \frac{|v(x)|^2}{1+|x|^{d+2\gamma}} dx
	\le C \left( R^{d+2s} |v|^2_\X + R^d \| v \|^2_{L^2(\Omega)} \right).
	\end{equation}
	
	On the other hand, because $\gamma \ge s$, if $x \in \Lambda_R^c$ then we have
	\begin{equation} \label{eq:decay_measure}
	\int_\Omega \frac{dy}{|x-y|^{d+2s}} \simeq (1+|x|^{d+2s})^{-1} \ge C (1+|x|^{d+2\gamma})^{-1} R^{2(\gamma - s)}.
	\end{equation}
	Therefore, we obtain
	\[ \begin{split}
	\int_{\Lambda_R^c} \frac{|v(x)|^2}{1+|x|^{d+2\gamma}} dx & \le C R^{2s - 2\gamma} \iint_{\Lambda_R^c \times \Omega} \frac{|v(x)|^2}{|x-y|^{d+2s}} dy dx  \\ 
	& \le  C R^{2s - 2\gamma}\left( \iint_{\Lambda_R^c \times \Omega} \frac{|v(x) - v(y)|^2}{|x-y|^{d+2s}} dy dx + \iint_{\Lambda_R^c \times \Omega} \frac{|v(y)|^2}{|x-y|^{d+2s}} dy dx \right).
	\end{split} \]
	The first integral in the right hand side above is bounded by $|v|^2_\X$. In order to bound the second one, we observe that
	\begin{equation} \label{eq:decay-Omega-Lambda}
	\int_{\Lambda_R^c} \frac{dx}{|x-y|^{d+2s}} \simeq R^{-2s} \quad \mbox{for } y \in \Omega.  
	\end{equation}
	Using this identity, we immediately get
	\[
	\iint_{\Lambda_R^c \times \Omega} \frac{|v(y)|^2}{|x-y|^{d+2s}} dy dx \le C R^{-2s} \| v \|^2_{L^2(\Omega)}.
	\]
	Thus, we have shown that
	\[
	\int_{\Lambda_R^c} \frac{|v(x)|^2}{1+|x|^{d+2\gamma}} dx  \le C \left(  R^{2s - 2\gamma} |v|_\X^2 + R^{-2\gamma} \| v \|^2_{L^2(\Omega)} \right) ,
	\]
	and combining this estimate with \eqref{eq:near_bound}, we conclude that \eqref{eq:trace} holds.
\end{proof}

The trace-type inequality we have just proved yields the boundedness of the operator $\V \ni v \mapsto (g, v)_{L^2(\Omega^c)}$, which in turn gives rise to the well-posedness of the weak formulation. \fran{Let us denote by $[L^2_{\gamma}(\Omega^c)]'$ the dual space to $L^2_{\gamma}(\Omega^c)$.}
We shall assume that the nonlocal flux density $g$ \fran{belongs to $[L^2_{\gamma}(\Omega^c)]'$ for some $\gamma \ge s$, so that it} satisfies the condition
\begin{equation}\label{eq:cond-g}
\fran{\| g \|_{[L^2_{\gamma}(\Omega^c)]'}^2 =} \int_{\Omega^c} |g(x)|^2 (1+|x|^{d+2\gamma}) \, dx. < \infty
\end{equation}
Combining this hypothesis with Proposition \ref{prop:trace} gives
\begin{equation} \label{eq:boundedness-g}
\int_{\Omega^c} g(x) v(x) \, dx \le \fran{\| g \|_{[L^2_{\gamma}(\Omega^c)]'} \| v \|_{\V} .}
\end{equation}

\begin{lemma}[well-posedness] \label{lem:well-posedness}
	Let $s \in (0,1)$, $\alpha > 0$, $f \in L^2(\Omega)$ and \fran{$g \in [L^2_{s}(\Omega^c)]'$, so} that \eqref{eq:cond-g} holds with $\gamma = s$. 
	Then, there exists a unique $u \in \V$ that solves the weak problem \eqref{eq:weak_form}.
\end{lemma}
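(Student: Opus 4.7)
The plan is to apply the Lax--Milgram theorem to the bilinear form
\[
B(u,v) := \langle u, v \rangle_\X + \alpha (u,v)_{L^2(\Omega)}
\]
and the linear functional $L(v) := (f,v)_{L^2(\Omega)} + (g,v)_{L^2(\Omega^c)}$ on the Hilbert space $\V$ (which is a Hilbert space by \cite[Proposition 3.1]{DiRoVa17}, as noted after the definition of $\V$). The weak formulation \eqref{eq:weak_form} is exactly $B(u,v) = L(v)$ for all $v \in \V$, so once the hypotheses of Lax--Milgram are verified, existence and uniqueness of $u \in \V$ follow.

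First I would verify that $B$ is bilinear, bounded, and coercive on $\V \times \V$. Bilinearity is clear from the definitions. Boundedness follows from the Cauchy--Schwarz inequality applied both to $\langle \cdot, \cdot \rangle_\X$ (viewed as an inner product on the weighted $L^2$ space on $(\R^d \times \R^d)\setminus(\Omega^c \times \Omega^c)$) and to $(\cdot,\cdot)_{L^2(\Omega)}$, yielding
\[
|B(u,v)| \le |u|_\X |v|_\X + \alpha \|u\|_{L^2(\Omega)} \|v\|_{L^2(\Omega)} \le \max(1,\alpha)\,\|u\|_\V \|v\|_\V.
\]
Coercivity uses the assumption $\alpha > 0$: directly from the definitions,
\[
B(v,v) = |v|_\X^2 + \alpha \|v\|_{L^2(\Omega)}^2 \ge \min(1,\alpha)\, \|v\|_\V^2.
\]

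Next I would check continuity of $L$. The $L^2(\Omega)$ term is bounded by $\|f\|_{L^2(\Omega)} \|v\|_{L^2(\Omega)} \le \|f\|_{L^2(\Omega)} \|v\|_\V$, since $f \in L^2(\Omega)$. The delicate term is $(g,v)_{L^2(\Omega^c)}$, but this has already been handled in \eqref{eq:boundedness-g}: by assumption \eqref{eq:cond-g} with $\gamma = s$ and Proposition \ref{prop:trace},
\[
\Bigl|\int_{\Omega^c} g(x) v(x)\, dx\Bigr| \le \Bigl(\int_{\Omega^c} |g(x)|^2 (1+|x|^{d+2s})\, dx\Bigr)^{1/2} \|v\|_{L^2_s(\Rd)} \le C_g\, \|v\|_\V.
\]
Hence $L$ is bounded on $\V$. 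The main obstacle in this proof was really the trace-type inequality, which is already established as Proposition \ref{prop:trace}; with that in hand, the present lemma reduces to a direct verification of the Lax--Milgram hypotheses, which I would invoke to conclude the existence of a unique $u \in \V$ satisfying \eqref{eq:weak_form}.
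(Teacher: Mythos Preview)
Your proof is correct and follows essentially the same approach as the paper: a direct application of the Lax--Milgram lemma, with coercivity and boundedness of the bilinear form being immediate from $\alpha>0$, and continuity of the linear functional coming from the trace-type inequality via \eqref{eq:boundedness-g}. Your version is slightly more explicit about the constants, but the argument is otherwise identical.
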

\begin{proof}
	The proof follows immediately by the Lax-Milgram lemma. On the one hand, because $\alpha > 0$ the bilinear functional
	\[
	\V \times \V \ni (u,v) \mapsto \langle u , v \rangle_{\X} + \alpha (u,v)_{L^2(\Omega)}
	\]
	is trivially continuous and coercive.
	
	On the other hand, the continuity of the map
	\[
	\V \ni v \mapsto  (f,v)_{L^2(\Omega)} + (g,v)_{L^2(\Omega^c)}
	\]
	follows because $|v|_{H^s(\Omega)} \le \|v\|_{\V}$ and by \eqref{eq:boundedness-g}:
	\[
	\left| \int_\Omega f(x) v(x) \, dx + \int_{\Omega^c} g(x) v(x) \, dx \right| \le \left(\|f \|_{L^2(\Omega)} + \fran{\| g \|_{[L^2_{s}(\Omega^c)]'}} \right) \| v \|_\V .
	\]
\end{proof}

\begin{remark}[energy minimizer] Using standard arguments, one can show that $u \in \V$ solves \eqref{eq:weak_form} if and only if $u$ is a critical point of the energy $\calF$ in \eqref{eq:energy} and, in turn, that such an energy admits a unique minimizer:
	\[
	\calF(v) = \calF(u) + \frac12 |v-u|_\X^2 + \frac{\alpha}{2} \| v-u\|_{L^2(\Omega)}^2, \quad \forall v \in \V.
	\]
\end{remark}

\begin{remark}[case $\alpha = 0$]
	Naturally, in case $\alpha = 0$ one requires the compatibility condition 
	\[
	\int_\Omega f = - \int_{\Omega^c} g
	\]
	to guarantee the well-posedness of the weak problem, whose solution is unique up to an additive constant. We refer to \cite[Theorem 3.9]{DiRoVa17} for details. We point out that such a Theorem has the less restrictive decay hypothesis $g \in L^1(\Omega^c)$, but it additionally requires the existence of some $\psi$ of class $C^2$ such that $\Ns \psi = g$ in $\Omega^c$.
\end{remark}

\subsection{Decay of solutions} 
When performing finite element discretizations of \eqref{eq:weak_form}, we shall need to truncate $\Omega^c$ and compute solutions over a family of computational domains $\{ \Lambda_H \}$ with finite diameter. We shall allow the finite element solutions not to vanish over $\Lambda_H^c$ but rather to be constant on this set. While this adds an additional degree of freedom in our computations, it gives an improvement in the approximation of solutions (cf. Remark \ref{rmk:average_uh} below). 

This is particularly useful if the exact solution was known to be bounded at infinity, which a priori may not be the case. As we discussed in Remark \ref{rmk:blow-up}, functions in $\V$ may blow up like $|x|^\alpha$ for $\alpha \in (0,s)$. Because $u \in \V$ is the solution of \eqref{eq:weak_form}, 
one can prove further decay of $u$ by assuming further decay on the flux density $g$.

\begin{proposition}[decay of solutions] \label{prop:further_decay}
	Let $s \in (0,1)$, $\alpha > 0$, $f \in L^2(\Omega)$ and \fran{$g \in [L^2_{s+\beta}(\Omega^c)]'$} for some $\beta \in (0,s)$. 
	Then, the unique solution $u \in \V$ of \eqref{eq:weak_form} belongs to the space $L^2_{s-\beta}(\R^d)$, and it satisfies
	\[
	\| u \|_{L^2_{s-\beta}(\R^d)} \le \fran{ C \left( \| g \|_{[L^2_{s+\beta}(\Omega^c)]'} + \| u \|_{\V} \right). }
	\]
\end{proposition}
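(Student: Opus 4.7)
The plan is to split $\R^d = \Lambda_R \cup \Lambda_R^c$ with $R>\mbox{diam}(\Omega)$ fixed, and to estimate $\|u\|_{L^2_{s-\beta}(\R^d)}$ separately on each region. On the bounded set $\Lambda_R$, the weights $(1+|x|^{d+2(s-\beta)})^{-1}$ and $(1+|x|^{d+2s})^{-1}$ are comparable up to an $R$-dependent constant, so the trace-type inequality of Proposition~\ref{prop:trace} (applied with $\gamma=s$) immediately gives
\[
\int_{\Lambda_R} \frac{|u(x)|^2}{1+|x|^{d+2(s-\beta)}}\,dx \le C_R\,\|u\|_{L^2_s(\R^d)}^2 \le C_R\,\|u\|_\V^2.
\]
All the real work is the far-field estimate on $\Lambda_R^c$. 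The first step there is to justify that $\Ns u=g$ holds pointwise a.e.\ in $\Omega^c$. This follows by testing the weak formulation \eqref{eq:weak_form} against smooth functions supported in $\Omega^c$ at positive distance from $\partial\Omega$ (which lie in $\V$, since the kernel in \eqref{eq:norm_X} is then bounded on the domain of integration), and by observing that symmetrization of the bilinear form reduces $\langle u,v\rangle_\X$ to $\int_{\Omega^c} v\,\Ns u$.

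Having the pointwise identity $\Ns u(x)=g(x)$, I would solve for $u(x)$:
\[
u(x) = \frac{g(x)}{m(x)} + \frac{C_{d,s}}{m(x)}\int_\Omega \frac{u(y)}{|x-y|^{d+2s}}\,dy, \qquad m(x) := C_{d,s}\int_\Omega \frac{dy}{|x-y|^{d+2s}}.
\]
For $x\in\Lambda_R^c$, the asymptotics in \eqref{eq:decay-Lambda-Omega} yield $m(x)^{-1}\le C(1+|x|)^{d+2s}$, and the uniform comparison $|x-y|\simeq 1+|x|$ for $y\in\Omega$ together with Cauchy--Schwarz gives $\int_\Omega |u(y)|\,|x-y|^{-(d+2s)}\,dy \le C(1+|x|)^{-(d+2s)}\|u\|_{L^2(\Omega)}$. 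Substituting produces the pointwise bound
\[
|u(x)| \le C\bigl((1+|x|)^{d+2s}|g(x)| + \|u\|_{L^2(\Omega)}\bigr) \qquad \text{for a.e.\ } x\in\Lambda_R^c.
\]

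Squaring this inequality and integrating against $(1+|x|^{d+2(s-\beta)})^{-1}$ over $\Lambda_R^c$ produces two contributions. Matching exponents, the first is comparable to $\int_{\Lambda_R^c}|g(x)|^2(1+|x|^{d+2(s+\beta)})\,dx$, which is precisely $C_g^2$ under hypothesis \eqref{eq:cond-g} with $\gamma = s+\beta$. The second is proportional to $\|u\|_{L^2(\Omega)}^2\int_{\Lambda_R^c}(1+|x|^{d+2(s-\beta)})^{-1}\,dx$, and integration in polar coordinates shows this is finite precisely because $\beta<s$ makes the exponent $d+2(s-\beta)$ exceed $d$. Combining with the near-field estimate and using $\|u\|_{L^2(\Omega)}\le\|u\|_\V$ yields the claim. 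The main technical point to watch is the pointwise Neumann identity together with absolute convergence of the kernels involved when solving for $u(x)$; once this representation is secured, the remainder is an exponent-matching calculation in which the assumption $\beta\in(0,s)$ emerges naturally as the balance between the $|x|^{d+2s+2\beta}$ factor produced by inverting $\Ns$ and the integrability threshold for $|x|^{-(d+2(s-\beta))}$ at infinity.
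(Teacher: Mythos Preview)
Your proof is correct and follows essentially the same route as the paper: split into $\Lambda_R$ and $\Lambda_R^c$, use the pointwise identity $\Ns u=g$ on $\Lambda_R^c$ to solve for $u(x)$, then combine the kernel asymptotics $m(x)\simeq (1+|x|)^{-(d+2s)}$ with Cauchy--Schwarz and match exponents. The only differences are cosmetic: the paper cites the near-field estimate directly from the proof of Proposition~\ref{prop:trace} rather than via weight comparability, and it takes the pointwise Neumann identity for granted where you (correctly) sketch its justification via testing against smooth functions supported away from $\partial\Omega$.
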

\begin{proof}
	Let $R > 0$. Using the notation \eqref{eq:defofLambda} and taking into account the first part of the proof of Proposition \ref{prop:trace}, we only need to estimate $\left\| \frac{u}{\sqrt{1+|\cdot|^{d+2(s-\beta)}}} \right\|_{L^2(\Lambda_R^c)}$. For that purpose, we exploit that for a.e. $x \in \Lambda_R^c$ it holds
	\[
	g(x) = \Ns u (x) = C_{d,s} \int_\Omega \frac{u(x) - u(y)}{|x-y|^{d+2s}} \, dy,
	\]
	and therefore
	\begin{equation}
	\label{eq:dato_afuera}
	u(x) \, C_{d,s}  \int_\Omega \frac{1}{|x-y|^{d+2s}} \, dy = g(x) + C_{d,s} \int_\Omega \frac{u(y)}{|x-y|^{d+2s}} \, dy.
	\end{equation}
	
	We use \eqref{eq:decay_measure}, the Cauchy-Schwarz inequality to obtain
	\begin{equation*} \label{eq:decay-u}
	\frac{|u(x)|}{1+|x|^{d+2s}}  \le C \left( |g(x)| + \frac{\| u \|_{L^2(\Omega)}}{1+|x|^{d+2s}} \right),
	\end{equation*}
	and multiplying both sides by $1+|x|^{d/2+s+\beta}$, taking squares and integrating over $\Lambda_R^c$, we deduce
	\[
	\int_{\Lambda_R^c} \frac{|u(x)|^2}{1+|x|^{d+2(s-\beta)}} dx \le C \left( 
\fran{\| g \|_{[L^2_{s+\beta}(\Lambda_R^c)]'}^2}
	+ R^{-2(s - \beta)} \| u \|_{L^2(\Omega)}^2 \right). 
	\]
	The result follows.
\end{proof}

\begin{remark}[optimality] \label{rmk:sol-constante}
	A simple example shows that the open-endedness of the range $\beta < s$ in Proposition \ref{prop:further_decay} is optimal. Indeed, assume that $g \equiv 0$ and $f \equiv \alpha$ in \eqref{eq:weak_form}. Then, the solution to such a problem is $u \equiv 1$, that satisfies $u \in \cap_{\beta < s} L^2_{s-\beta}(\Omega^c)$ but $u \notin L^2_0(\Omega^c)$.
\end{remark}

\begin{corollary}[Neumann conditions with strong decay] \label{cor:strong-decay}
	Let $s \in (0,1)$, $\alpha \ge 0$, $f \in L^2(\Omega)$ and $g$ be such that 
	\begin{equation} \label{eq:strong-decay-g}
	g(x) |x|^{d+2s} \to 0 \quad \mbox{as } |x| \to \infty.
	\end{equation} 
	Then, the unique solution $u \in \V$ of \eqref{eq:weak_form} satisfies
	\[
	\lim_{|x| \to \infty} u(x) = \frac{1}{|\Omega|} \int_{\Omega} u  = \frac1{\alpha|\Omega|}\left(\int_\Omega f + \int_{\Omega^c} g \right).
	\]
\end{corollary}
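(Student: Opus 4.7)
The plan is to combine two ingredients: a testing argument in the weak formulation to identify the mean $\int_\Omega u$, and an asymptotic analysis of the pointwise identity \eqref{eq:dato_afuera} to compute the limit of $u(x)$ at infinity.

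First, I would establish the second equality. The constant function $v\equiv 1$ belongs to $\V$ since its $\X$-seminorm vanishes and $\|1\|_{L^2(\Omega)}=|\Omega|^{1/2}<\infty$. Moreover, the strong decay assumption \eqref{eq:strong-decay-g} together with a Cauchy--Schwarz estimate (writing $|g|=|g|(1+|x|^{d+2s})^{1/2}\cdot(1+|x|^{d+2s})^{-1/2}$ and using that $(1+|x|^{d+2s})^{-1}$ is integrable on $\Omega^c$ for $s>0$) shows that $g\in L^1(\Omega^c)$ and that \eqref{eq:cond-g} holds with $\gamma=s$, so the weak formulation \eqref{eq:weak_form} is well-posed. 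Testing \eqref{eq:weak_form} with $v\equiv 1$ makes $\langle u,1\rangle_{\X}=0$, leaving
\[
\alpha\int_\Omega u \;=\; \int_\Omega f + \int_{\Omega^c} g,
\]
which, when $\alpha>0$, yields the second equality after dividing by $\alpha|\Omega|$. (If $\alpha=0$, this is the compatibility condition and the first equality alone identifies the additive constant in $u$.)

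Second, I would prove the first equality by taking $|x|\to\infty$ in the identity \eqref{eq:dato_afuera}. Dividing both sides by $C_{d,s}\int_\Omega |x-y|^{-d-2s}\,dy$ gives, for a.e.\ $x\in\Omega^c$,
\[
u(x) \;=\; \frac{\,g(x)/C_{d,s} + \int_\Omega u(y)\,|x-y|^{-d-2s}\,dy\,}{\int_\Omega |x-y|^{-d-2s}\,dy}.
\]
Because $\Omega$ is bounded, $|x-y|/|x|\to 1$ uniformly in $y\in\Omega$ as $|x|\to\infty$, so dominated convergence (with the uniform dominations $|x|^{d+2s}|x-y|^{-d-2s}\le C$ and $|x|^{d+2s}|x-y|^{-d-2s}|u(y)|\le C|u(y)|\in L^1(\Omega)$, since $u\in L^2(\Omega)\subset L^1(\Omega)$) gives
\[
|x|^{d+2s}\int_\Omega\frac{dy}{|x-y|^{d+2s}}\to|\Omega|,\qquad |x|^{d+2s}\int_\Omega\frac{u(y)\,dy}{|x-y|^{d+2s}}\to\int_\Omega u.
\]
Combined with the hypothesis $|x|^{d+2s}g(x)\to 0$, multiplying numerator and denominator of the displayed expression for $u(x)$ by $|x|^{d+2s}$ and passing to the limit yields $u(x)\to \frac{1}{|\Omega|}\int_\Omega u$.

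The main obstacle is essentially notational: the identity for $u(x)$ is a priori valid only almost everywhere in $\Omega^c$, while the statement speaks of a pointwise limit. This is handled by observing that the right-hand side of the divided identity is a continuous function of $x\in\Omega^c$ (the kernel $|x-y|^{-d-2s}$ depends continuously on $x$ uniformly in $y\in\Omega$ away from $\overline{\Omega}$, and $g$ inherits continuity at infinity from the decay hypothesis in the sense that $|x|^{d+2s}g(x)\to 0$), so $u$ admits a continuous representative on $\Omega^c$ and the limit holds in the classical sense. The rest of the proof is routine bookkeeping of constants.
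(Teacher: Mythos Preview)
Your approach is essentially the paper's: divide identity \eqref{eq:dato_afuera} by $C_{d,s}\int_\Omega|x-y|^{-d-2s}\,dy$ and pass to the limit (the paper cites \cite[Proposition~3.13]{DiRoVa17} where you spell out the dominated convergence), and test \eqref{eq:weak_form} with $v\equiv 1$ to identify the mean. One minor overclaim: the decay hypothesis \eqref{eq:strong-decay-g} alone does not imply \eqref{eq:cond-g}, since it gives no control of $g$ near $\partial\Omega$---but this is immaterial, as the statement already presupposes the existence of the unique solution $u\in\V$.
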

\begin{proof}
	We exploit formula \eqref{eq:dato_afuera}. In first place,  arguing as in \cite[Proposition 3.13]{DiRoVa17} one derives that 
	\[
	\lim_{|x| \to \infty} \frac{\int_\Omega \frac{u(y)}{|x-y|^{d+2s}} \, dy}{\int_\Omega \frac{1}{|x-y|^{d+2s}} \, dy} = \frac{1}{|\Omega|} \int_{\Omega} u .
	\]
	Additionally, from the decay hypothesis \eqref{eq:strong-decay-g} and \eqref{eq:decay_measure}, we have
	\[
	\lim_{|x| \to \infty} \frac{g(x)}{\int_\Omega \frac{1}{|x-y|^{d+2s}} \, dy } = 0.
	\]
	Finally, using the test function $v \equiv 1$ in \eqref{eq:weak_form} we deduce that $\frac{1}{|\Omega|} \int_{\Omega} u = \frac1{\alpha|\Omega|}(\int_\Omega f + \int_{\Omega^c} g)$.
\end{proof}

\begin{remark}[Neumann conditions with weaker decay]\label{rmk:asintotico}
	In a similar fashion as in Corollary \ref{cor:strong-decay}, it follows that if $g \ge 0$ is such that $g(x) |x|^{d+2s} \to \infty$ as $|x| \to \infty$, then the solution $u \in \V$ of \eqref{eq:weak_form} verifies $u(x) \to + \infty$ as $|x| \to \infty$. More in general, if $g(x) |x|^{d+2s} \to \kappa$ as $|x| \to \infty$ for some $\kappa \in \R$, then
	\[
	u(x) \to \frac{\kappa}{C_{d,s}|\Omega|} +  \frac{1}{|\Omega|} \int_{\Omega} u, \quad \mbox{as } |x| \to \infty.
	\]
\end{remark}

\subsection{Interior regularity}
Besides decay of solutions at infinity, another important aspect we need to take into account is their interior regularity within $\Omega$. 
We make use of a local regularity estimate from \cite[Theorem 2.1]{CozziM_2017}. Such a result requires the condition $u \in L^1_{2s}(\Rd)$; because of the continuity of the embedding $L^2_s(\Rd) \subset L^1_{2s}(\Rd)$, this assumption holds whenever the Neumann datum verifies \fran{$g \in [L^2_{s}(\Omega^c)]'$.}

\begin{theorem}[interior regularity] \label{thm:interior_regularity}
	Under the same conditions as Lemma \ref{lem:well-posedness}, the unique solution $u \in \V$ of \eqref{eq:weak_form} satisfies $u \in \cap_{\eps > 0} H^{2s-\eps}_{loc}(\Omega)$, and for every $\eps > 0$ and $\Omega' \Subset \Omega$,
	\[
	\| u \|_{H^{2s-\eps}(\Omega')} \le C \left(\| f \|_{L^2(\Omega)} + \| u \|_{L^2(\Omega)} + \| u \|_{L^1_{2s}(\R^d)} \right).
	\]
\end{theorem}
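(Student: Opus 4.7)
The plan is to reduce Theorem \ref{thm:interior_regularity} to a direct application of the cited interior regularity result \cite[Theorem 2.1]{CozziM_2017}. First, I would show that the weak formulation \eqref{eq:weak_form}, restricted to test functions $v \in C_c^\infty(\Omega) \subset \V$, implies that $u$ solves $\Laps u = f - \alpha u$ in $\Omega$ in the usual weak sense. Indeed, for such $v$ the set of integration $(\R^d \times \R^d) \setminus (\Omega^c \times \Omega^c)$ in the definition of $\langle \cdot , \cdot\rangle_\X$ can be replaced by all of $\R^d \times \R^d$, since $v(x)-v(y)$ vanishes on the excluded set $\Omega^c \times \Omega^c$; hence $\langle u, v \rangle_\X$ reduces to the usual Gagliardo pairing on $\R^d$ and \eqref{eq:weak_form} yields the distributional identity inside $\Omega$. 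Because $u \in L^2(\Omega)$ (as $u \in \V$) and $f \in L^2(\Omega)$, the forcing $f - \alpha u$ lies in $L^2(\Omega)$, matching the $L^2$-data hypothesis of \cite{CozziM_2017}.

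Next, I would verify the global integrability condition $u \in L^1_{2s}(\R^d)$ required by \cite[Theorem 2.1]{CozziM_2017}. Proposition \ref{prop:trace} already gives $u \in L^2_s(\R^d)$ with $\|u\|_{L^2_s(\R^d)} \le C \|u\|_\V$. The remaining embedding $L^2_s(\R^d) \hookrightarrow L^1_{2s}(\R^d)$ follows by Cauchy-Schwarz applied to the splitting
\[
\frac{|u(x)|}{1+|x|^{d+2s}} = \frac{|u(x)|}{\sqrt{1+|x|^{d+2s}}}\cdot \frac{1}{\sqrt{1+|x|^{d+2s}}},
\]
after observing that $\int_{\R^d}(1+|x|^{d+2s})^{-1}\,dx < \infty$. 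With these two ingredients in place, \cite[Theorem 2.1]{CozziM_2017} applied to each $\Omega' \Subset \Omega$ yields $u \in H^{2s-\eps}(\Omega')$ for every $\eps>0$, together with a quantitative bound in terms of $\|f - \alpha u\|_{L^2(\Omega)}$ and $\|u\|_{L^1_{2s}(\R^d)}$; rearranging via the triangle inequality gives the stated estimate.

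I expect the only real obstacle to be bookkeeping: checking that the notion of weak solution used in \cite{CozziM_2017} coincides with the one produced by our variational formulation (both are defined by pairing against compactly supported test functions through the Gagliardo form), and tracking that the constant in their estimate depends exactly on the norms appearing in the theorem statement, so that $\alpha\|u\|_{L^2(\Omega)}$ can be absorbed into the $\|u\|_{L^2(\Omega)}$ term. Once that identification is made, no further argument is required beyond the two reductions above.
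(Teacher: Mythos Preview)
Your proposal is correct and matches the paper's approach exactly: the paper does not give a self-contained proof but simply invokes \cite[Theorem 2.1]{CozziM_2017} after noting that the embedding $L^2_s(\R^d) \subset L^1_{2s}(\R^d)$ (which you verify via Cauchy--Schwarz) guarantees the hypothesis $u \in L^1_{2s}(\R^d)$. Your additional remarks---that restricting \eqref{eq:weak_form} to $v \in C_c^\infty(\Omega)$ yields $(-\Delta)^s u = f - \alpha u$ weakly in $\Omega$, and that $\alpha\|u\|_{L^2(\Omega)}$ is absorbed into the $\|u\|_{L^2(\Omega)}$ term---make explicit what the paper leaves implicit.
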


\section{Discretization} \label{sec:FE}
We approximate \eqref{eq:weak_form} by means of the finite element method. For that purpose, we consider a mesh-size number $h>0$ and, for $H=H(h)>0$ we take a computational domain $\Lambda_H$ according to \eqref{eq:defofLambda}. We consider admissible triangulations $\calT_h$ of $\Lambda_H$, which we assume that mesh $\overline \Omega$ exactly. Additionally, the family $\{ \calT_h \}$ is set to be shape-regular, namely,
\[
\sigma := \sup_{h>0} \max_{T \in \calT_h} \frac{h_T}{\rho_T} <\infty,
\]
where $h_T = \mbox{diam}(T)$ and $\rho_T $ is the diameter of the largest ball contained in $T$. As usual, the subindex $h$ denotes the mesh size, $h = \max_{T \in \calT_h} h_T$; moreover, we take elements to be closed sets.

We make use of continuous, piecewise linear functions over $\calT_h$. Let $\mathcal{N}_h$ be the set of vertices of $\calT_h$, $N$ be its cardinality, and $\{ \phii_i \}_{i=1}^N$ the standard piecewise linear Lagrangian basis, with $\phii_i$ associated to the node $\x_i \in \mathcal{N}_h$. In order to better capture the behavior of solutions at infinity, we additionally make use of constant functions over $\Lambda^{c}_H$. That is, we define $\phii_{N+1} := \chi_{\Lambda^c_H}$ and set
\begin{equation*} \label{eq:FE_space}
\mathbb{V}_h :=  \left\{ v_h \in C_0(\Lambda_H) \colon v_h = \sum_{i=1}^{N+1} v_i \varphi_i \right\}.
\end{equation*}

We emphasize that, in principle, the computational-domain size $H$ could be related to the mesh size number $h$. To prove the convergence of the finite element scheme we need $H \to \infty$ when $h \to 0$.

With the notation we just have defined, we seek a function $u_h \in \V_h$ such that
\begin{equation} \label{eq:weak_discrete}
\begin{aligned}
\langle u_h , v_h \rangle_{\X} + \alpha (u_h,v_h)_{L^2(\Omega)} =
(f,v_h)_{L^2(\Omega)} + (g,v_h)_{L^2(\Omega^c)} 
\end{aligned} \end{equation}
for all $v_h \in \V_h$. If we set $u_h = \sum_{i=1}^{N+1} U_i \phii_i$, we can write the weak formulation as a linear system of equations,
\begin{equation} \label{eq:disc_system}
\left(K + \alpha M \right) U = F + G, 
\end{equation}
where
\[
K_{ij} = \langle \phii_i , \phii_j \rangle_{\X} , \quad M_{ij} = (\phii_i,\phii_j)_{L^2(\Omega)}, \quad
F_j =  (f,\phii_j)_{L^2(\Omega)} \quad G_j = (g,\phii_j)_{L^2(\Omega^c)} .
\]
The stiffness matrix $K$ is symmetric and semidefinite positive, and because $\alpha > 0$ the matrix $\alpha M$ is symmetric and definite positive. Therefore, the system \eqref{eq:disc_system} admits a unique solution.

Since we are using discrete functions over $\overline{\Lambda_H}$ and a constant basis function on $\Lambda_H^c$, our discretizations are conforming: it holds that $\V_h \subset \V$ for all $h > 0$. By Galerkin orthogonality, we immediately deduce that
\[
| u - u_h |_\X^2 + \alpha \| u - u_h \|_{L^2(\Omega)}^2 = \min_{v_h \in \V_h} \left( | u - v_h |_\X^2 + \alpha \| u - v_h \|_{L^2(\Omega)}^2 \right), 
\]
from which the estimate
\begin{equation} \label{eq:best-approximation}
\| u - u_h \|_\V \le \max \{ \sqrt\alpha, \sqrt\alpha^{-1} \} \min_{v_h \in \V_h} \| u - v_h \|_\V.
\end{equation}
follows.

\begin{remark}[averages of finite element solutions] \label{rmk:average_uh}
	Because the constant function $v_h \equiv 1$ belongs to the discrete spaces $\V_h$ for all $h,H>0,$ we may use them as test functions in \eqref{eq:weak_discrete}. Therefore, it follows that the finite element solutions have the same averages over $\Omega$ as the solutions of \eqref{eq:weak_form},
	\[
	\frac{1}{|\Omega|} \int_{\Omega} u_h = \frac1{\alpha|\Omega|}\left(\int_\Omega f + \int_{\Omega^c} g\right) = \frac{1}{|\Omega|} \int_{\Omega} u .
	\]
	We point out that this property would not hold in general if we had not included the additional degree of freedom corresponding to $\phii_{N+1}$.
\end{remark}

\section{Interpolation and Convergence}
\label{sec:convergence}

Here we study the convergence of the finite element scheme proposed in Section \ref{sec:FE}. For that purpose, we first introduce a quasi-interpolation operator and analyze its stability and approximation properties. We afterwards combine these results with the best approximation properties of the finite element solution to prove the convergence of the method for locally bounded solutions but without any additional smoothness assumption.

\subsection{Interpolation}
We define the star of a set $A \in \Omega$ by
$$
S^1_A := \bigcup \left\{ T \in \calT_h \colon T \cap A \neq \emptyset \right\}.
$$
Given $T \in \calT_h$, the star $S^1_T$ of $T$ is the first ring of $T$. Recursively, we define the higher-order rings of $T$: $S^{k+1}_{T} = S^1_{S^k_T}$, $k \in \N$.  The star of the node $\x_i \in \mathcal{N}_h$ is $S_i := \mbox{supp}(\varphi_i)$.
We denote by $B_i$ the maximal ball, centered at $\x_i$, and contained in $S_i$. If $\rho_i$ is the radius of $B_i$, and $h_i = \mbox{diam}(S_i)$ by shape regularity of the mesh we have the equivalences $\rho_i \simeq h_i \simeq h_T$, for all $T \subset S_i$.

A detailed proof of the following observation, which is due to Faermann \cite{Faermann}, can be found in \cite[Lemma 3.2]{BoLeNo20}.

\begin{lemma}[symmetry]\label{lemma:symmetry}
	For any $v,w\in L^1(\Lambda)$, and $\rho:\R^+\to\R^+$ bounded, there holds
	\[
	\sum_{T\in\calT_h \colon T \cap \Lambda \neq \emptyset} \int_T \int_{(S_T^1)^c  \cap \Lambda}  v(y) \, w(x) \, \rho(|x-y|) dy dx = 
	\sum_{T\in\calT_h \colon T \cap \Lambda \neq \emptyset} \int_T \int_{(S_T^1)^c \cap\Lambda}  v(x) \, w(y) \, \rho(|x-y|) dy dx.
	\]
\end{lemma}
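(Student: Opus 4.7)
My plan is to exploit the symmetry of the binary relation $T \cap T' = \emptyset$ between mesh elements, which encodes exactly the condition $T' \subset (S_T^1)^c$. The starting point is the observation that, since the elements of $\calT_h$ tessellate their union and two distinct elements share at most a lower-dimensional face, up to a set of $d$-dimensional measure zero
\[
(S_T^1)^c = \bigcup_{\substack{T' \in \calT_h \\ T' \cap T = \emptyset}} T'.
\]
Intersecting with $\Lambda$ then decomposes $(S_T^1)^c \cap \Lambda$ as a countable essentially-disjoint union of pieces $T' \cap \Lambda$, where only elements with $T' \cap \Lambda \neq \emptyset$ contribute.

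I will substitute this decomposition into the left-hand side and invoke Fubini (justified because $\rho$ is bounded and $v, w \in L^1(\Lambda)$) to rewrite it as a double sum
\[
\text{LHS} = \sum_{\substack{T, T' \in \calT_h, \; T \cap \Lambda \neq \emptyset, \; T' \cap \Lambda \neq \emptyset \\ T \cap T' = \emptyset}} \int_T \int_{T' \cap \Lambda} v(y)\, w(x)\, \rho(|x-y|)\, dy\, dx.
\]
Since the index set $\{(T, T') : T \cap T' = \emptyset\}$ is symmetric under interchange of its two entries, I will first swap the summation labels $T \leftrightarrow T'$, then rename the dummy variables $x \leftrightarrow y$ (using $\rho(|x-y|) = \rho(|y-x|)$). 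The combined double swap exchanges $v(y) w(x)$ for $v(x) w(y)$ while returning the integration region to the same structural form, and re-collecting the inner sum by the decomposition in reverse then produces the right-hand side.

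The main obstacle is purely bookkeeping: tracking which integration variable ranges over which set through both swaps, and verifying that the restrictions to $\Lambda$ match correctly on both sides after the relabeling. In the setting of the paper, where $\calT_h$ triangulates a region containing $\Lambda$ and every contributing $T$ lies in $\Lambda$, the intersections $T' \cap \Lambda$ collapse to $T'$ and the identity reduces to the manifestly symmetric expression $\sum_{T \cap T' = \emptyset} \iint_{T \times T'} \cdots$, at which point the symmetry is immediate; in the general case the same argument goes through, provided one keeps careful track of the $\cap\,\Lambda$ restrictions throughout.
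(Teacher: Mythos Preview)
Your argument is correct and is exactly the standard proof of this identity; the paper itself does not give a proof but defers to Faermann and \cite{BoLeNo20}. One small point to make your ``general case'' cleaner: since $v,w\in L^1(\Lambda)$ are implicitly extended by zero off $\Lambda$, the outer integral $\int_T$ in your displayed double sum may be replaced by $\int_{T\cap\Lambda}$ from the start, after which the expression $\sum_{T\cap T'=\emptyset}\iint_{(T\cap\Lambda)\times(T'\cap\Lambda)}\cdots$ is manifestly symmetric under $(T,x)\leftrightarrow(T',y)$ and no further bookkeeping is needed.
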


We split the mesh nodes into two disjoint sets, consisting of either vertices in $\overline{\Omega}$ and in $\Omega^c$,
\[
\mathcal{N}_h^\circ = \left\{ \x_i \colon \x_i \in \overline \Omega \right\}, \qquad  \mathcal{N}_h^c = \left\{ \x_i \colon \x_i \in \Omega^c \right\} = \left\{ \x_i \colon S_i \subset \overline{\Omega^c} \right\} .
\]

We shall construct a quasi-interpolation (averaging) operator that, within $\overline\Omega$, considers averages over $\Omega$ only. For that purpose, given a mesh node $\x_i$, we define the region
\[
R_i := \left\lbrace \begin{array}{rl}
B_i & \mbox{if } \x_i \in \R^d \setminus \pp\Omega, \\
B_i \cap \Omega & \mbox{if } \x_i \in \pp\Omega.
\end{array} \right.
\]
\fran{This definition guarantees that the broken quasi-interpolation operator defined below only takes averages within $\Omega$ for nodes in $\mathcal{N}_h^\circ$ and within $\Omega^c$ for nodes in $\mathcal{N}_h^c$.}
We remark that shape regularity implies $|R_i| \simeq h_T^d$ for all $T \subset S_i$.

\begin{definition}[quasi-interpolation operator]
	Let the broken quasi-interpolation operator $I_h : L^1(\Omega) \to \V_h$ be defined by
	\[
	I_h v = \sum_{\x_i \in \mathcal{N}_h} \left( \frac1{|R_i|} \int_{R_i} v(x) dx \right) \phii_i.
	\]
\end{definition}

We remark that the definition above implies that $I_h v \equiv 0$ over the non-meshed region $\Lambda_H^c$. As long as one takes $H \to \infty$ as $h \to 0$, one can guarantee that the interpolation error tends to zero.

The operator $I_h$ is based on the positivity-preserving operator from \cite{ChenNochetto}; indeed, it coincides with such an operator everywhere except in the discrete boundary layer
\[
\{ T \in \calT_h \colon T \cap \pp\Omega \neq \emptyset \} .
\]
We shall therefore exploit some of the properties of that operator documented in \cite{BoNoSa18, ChenNochetto}. For instance, because for every $\x_i \in \Omega$ the ball $B_i$ is symmetric with respect to $\x_i$, the operator $I_h$ satisfies
\begin{equation*}
\label{eq:linear}
I_h v(\x_i) = v(\x_i), \quad \forall v \in P_1(B_i),
\end{equation*}
where by $P_1(E)$ we denote the space of polynomials of degree one over the set $E$. However, this operator is not a projection: in general $I_h v_h \neq v_h$ for $v_h \in \mathbb{V}_h$ even in the interior of the domain \cite{NochettoWahlbin}. 

Let $T \in \calT_h$ and consider its modified ring of order $k \in \N$,
\begin{equation*} \label{eq:patch-T}
\widetilde{S}^k_T = 
\left\lbrace \begin{array}{rl}
S^k_T & \mbox{if }  T \subset \Omega^c, \\
S^k_T \cap \Omega & \mbox{if } T \subset \Omega.
\end{array} \right.
\end{equation*}
Using standard arguments, one can prove the following estimates: 

\begin{equation} \label{eq:L2-interpolation-error}
\| v - I_h v \|_{L^2(T)} \le C h^t |v|_{H^t(\widetilde{S}^1_T)},
\end{equation}
\begin{equation} \label{eq:Hs-interpolation-error}
\int_T \int_{S^1_T}   \frac{ |(v-I_h v) (x) - (v-I_h v)(y) |^2}{|x-y|^{d+2s}} dy dx \le C h^{2(t-s)} |v|_{H^t(\widetilde{S}^2_T)}^2,
\end{equation}

These interpolation estimates are satisfactory to deal with functions that are locally smoother than $H^s$. However, we only know the solution $u$ of our problem to have such a regularity in the interior of the domain (cf. Theorem \ref{thm:interior_regularity}). The method we shall pursue to prove the convergence of $I_hu$ towards $u$ as $h\to 0$ relies on the stability of $I_h$. We now develop various stability estimates that will be employed to prove the convergence of our finite element scheme.

\begin{lemma}[stability w.r.t. to averages]
	\label{lem:stabIh}
	Let $s \in (0,1)$ and $T, T' \in \calT_h$. There is a constant $C$, depending only on the dimension $d$ and the shape regularity parameter $\sigma$ of the mesh, such that the estimate
	\[
	\iint_{T \times T'} \frac{ |I_hv(x) - I_hv(y) |^2}{|x-y|^{d+2s}} dy dx \leq \frac{C}{1-s} h_T^{d-2s} \sum_{i: \x_i \in T \cup T'} \left( \frac1{|R_i|} \int_{R_i} v(z) dz \right)^2
	\]
	holds for all $v \in L^1(\Lambda_H)$.
\end{lemma}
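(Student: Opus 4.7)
The plan is to expand $I_hv(x)-I_hv(y)$ in the nodal basis, apply Cauchy--Schwarz to isolate the averages $V_i := |R_i|^{-1}\!\int_{R_i} v$, and then bound the resulting double integral by combining the Lipschitz property of the hat functions with an explicit evaluation of $\iint_{T\times T'}|x-y|^{2-d-2s}\,dx\,dy$.

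The starting observation is that $\varphi_i$ vanishes on every element not containing $\x_i$ as a vertex, so for a.e.\ $x\in T$ and $y\in T'$,
\[
I_hv(x)-I_hv(y)=\sum_{\x_i\in T\cup T'} V_i\bigl(\varphi_i(x)-\varphi_i(y)\bigr),
\]
a sum with at most $2(d+1)$ nonzero terms. Cauchy--Schwarz in this finite sum yields
\[
|I_hv(x)-I_hv(y)|^2 \le C_d \sum_{\x_i\in T\cup T'} V_i^{\,2}\,|\varphi_i(x)-\varphi_i(y)|^2,
\]
and shape regularity provides $\|\nabla\varphi_i\|_{L^\infty}\le C h_i^{-1}\simeq C h_T^{-1}$ for every vertex $\x_i$ in the relevant local patch, giving $|\varphi_i(x)-\varphi_i(y)|^2\le C h_T^{-2}|x-y|^2$.

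The last step is the integration: using $\mathrm{diam}(T\cup T')\lesssim h_T$ and switching to polar coordinates around a point $x\in T$,
\[
\iint_{T\times T'} |x-y|^{2-d-2s}\,dy\,dx \le C\int_T\int_0^{C h_T} r^{1-2s}\,dr\,dx \le \frac{C}{1-s}\,h_T^{d+2-2s},
\]
the $(1-s)^{-1}$ factor arising from $\int_0^{Ch_T} r^{1-2s}\,dr=(Ch_T)^{2-2s}/(2-2s)$. Combining the three displays above produces the claimed bound.

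I expect the delicate point to be verifying that $h_i\simeq h_T$ uniformly over all $\x_i\in T\cup T'$. By shape regularity this is automatic for $\x_i\in T$, and extends to $\x_i\in T'$ whenever $T$ and $T'$ share a vertex (equivalently, when $T'\subset S_T^1$)---precisely the case producing a singular contribution to the integral. For pairs $T,T'$ with no common vertex the integrand stays bounded on $T\times T'$, so the cruder estimate $|\varphi_i(x)-\varphi_i(y)|\le 1$ together with a direct control of $\iint_{T\times T'}|x-y|^{-d-2s}\,dx\,dy$ produces the same conclusion with a constant $C$ absorbing any remaining $s$-dependence.
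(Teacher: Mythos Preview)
Your argument is the standard one and coincides with what the paper invokes by citing \cite{BoNoSa18}: expand $I_hv(x)-I_hv(y)$ in the hat basis, use the global Lipschitz bound $|\varphi_i(x)-\varphi_i(y)|\le Ch_T^{-1}|x-y|$ (valid because shape regularity gives $h_i\simeq h_T$ for every vertex $\x_i\in T\cup T'$ once $T'\subset S_T^1$), and integrate $|x-y|^{2-d-2s}$ in polar coordinates over a ball of radius $\simeq h_T$ to extract the factor $(1-s)^{-1}h_T^{d+2-2s}$. For the case $T'\subset S_T^1$, which is the only one the paper actually uses (see the proof of Lemma~\ref{lemma:bry-stab-Ih}), your proof is complete and matches the referenced argument.

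Your last paragraph, however, does not close the non-adjacent case, and in fact cannot. The crude estimate $|\varphi_i(x)-\varphi_i(y)|\le 1$ yields only
\[
\iint_{T\times T'}|x-y|^{-d-2s}\,dy\,dx\ \lesssim\ \frac{1}{s}\,|T|\,d(T,T')^{-2s},
\]
and this is controlled by $h_T^{d-2s}$ only when $d(T,T')\gtrsim h_T$. Shape regularity alone does not enforce that: in one dimension it is vacuous, and the configuration $T=[0,1]$, an intermediate interval $[1,1+\eps]$, and $T'=[1+\eps,1+2\eps]$ gives a left-hand side of order $\eps^{\,1-2s}$ for $s>1/2$, while the right-hand side stays bounded. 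So the sentence ``a constant $C$ absorbing any remaining $s$-dependence'' is not enough; either read the lemma with the implicit restriction $T'\subset S_T^1$ (which is how it is applied), or add an explicit hypothesis guaranteeing $d(T,T')\gtrsim h_T$.
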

\begin{proof}
	In case $R_i = B_i$, a proof of the proposition above can be found in  \cite{BoNoSa18}, and the same argument is valid in case $R_i = B_i \cap \Omega$.
\end{proof}

The right hand side in Lemma \ref{lem:stabIh} may not be the most appropriate to express the stability of the operator $I_h$ because it does not involve a seminorm of $v$. To obtain an expression better suited to deal with elements contained in $\Omega$, we make two simple observations. In first place, that the quasi-interpolation operator $I_h$ preserves constant functions; secondly, that fractional-order seminorms are invariant under sums. 

\begin{lemma}[local $H^s$-stability]
	\label{lemma:bry-stab-Ih}
	Let $s \in (0,1)$ and $T, T' \in \calT_h$ with $T\subset\Omega$ and $T' \subset S^1_T$. Then, there is a constant $C$ such that the estimate
	\begin{equation}\label{eq:stability-Ih-TT'}
	\iint_{T \times T'} \frac{ |I_h v(x) - I_h v(y) |^2}{|x-y|^{d+2s}} dy dx \leq C \left[ |v|_{H^s(\widetilde{S}_T^1)}^2 + \iint_{\widetilde{S}_T^1 \times \widetilde{S}_{T'}^1} \frac{ | v(x) - v(y) |^2}{|x-y|^{d+2s}} dy dx 
	\right]
	\end{equation}
	holds for all $v \in \mathbb{V}$. Moreover, the following estimate holds:
	\begin{equation}\label{eq:bdry-stability-Ih}
	\iint_{T \times S^1_T} \frac{ |I_h v(x) - I_h v(y) |^2}{|x-y|^{d+2s}} dy dx \leq C \iint_{\widetilde{S}_T^1 \times S_T^2} \frac{ | v(x) - v(y) |^2}{|x-y|^{d+2s}} dy dx.
	\end{equation}
\end{lemma}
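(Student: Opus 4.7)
My strategy combines three ingredients: (i) $I_h$ preserves constants on $\Lambda_H$ (since $\{\phii_i\}$ is a partition of unity there), so I can replace $v$ by $v - c$ for a cleverly chosen $c$; (ii) Lemma \ref{lem:stabIh}, which controls oscillations of $I_h w$ in terms of local averages of $w$; and (iii) an elementary Poincar\'e--Jensen bound. I will take $c := \frac{1}{|\widetilde S_T^1|} \int_{\widetilde S_T^1} v$. By shape regularity, $|\widetilde S_T^1| \simeq h_T^d$ (the inclusion $T \subset \widetilde S_T^1$ guarantees non-degeneracy since $T \subset \Omega$). Since $I_h(v-c) = I_h v - c$ on $T \cup T'$, applying Lemma \ref{lem:stabIh} to $v-c$ reduces the left-hand side of \eqref{eq:stability-Ih-TT'} to controlling
\[
\frac{C h_T^{d-2s}}{1-s} \sum_{\x_i \in T \cup T'} \left(\frac{1}{|R_i|}\int_{R_i}(v-c)\right)^2.
\]

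\textbf{Bounding each vertex term.} Rewriting
$\frac{1}{|R_i|}\int_{R_i}(v-c) = \frac{1}{|R_i||\widetilde S_T^1|} \iint_{R_i \times \widetilde S_T^1} (v(z)-v(w))\, dw\, dz,$
applying Cauchy--Schwarz, and using $|z-w| \le \operatorname{diam}(S_T^2) \lesssim h_T$, I obtain
\[
\left(\frac{1}{|R_i|}\int_{R_i}(v-c)\right)^2 \le \frac{C h_T^{2s}}{h_T^{d}} \iint_{R_i \times \widetilde S_T^1} \frac{|v(z)-v(w)|^2}{|z-w|^{d+2s}} \, dw \, dz.
\]
The key geometric facts are: if $\x_i \in T$ then $R_i \subset \widetilde S_T^1$, so this term is $\le \frac{C h_T^{2s}}{h_T^d}|v|^2_{H^s(\widetilde S_T^1)}$; if $\x_i \in T'$ then $R_i \subset \widetilde S_{T'}^1$, giving $\le \frac{C h_T^{2s}}{h_T^d} \iint_{\widetilde S_T^1 \times \widetilde S_{T'}^1} \frac{|v(x)-v(y)|^2}{|x-y|^{d+2s}} \, dy \, dx.$ Summing over the (bounded number of) vertices and multiplying by the $h_T^{d-2s}$ prefactor yields \eqref{eq:stability-Ih-TT'}.

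\textbf{Second estimate and main obstacle.} I derive \eqref{eq:bdry-stability-Ih} by summing \eqref{eq:stability-Ih-TT'} over $T' \subset S_T^1$. By shape regularity the number of such $T'$ is bounded; the $|v|_{H^s(\widetilde S_T^1)}^2$ terms are absorbed into $\iint_{\widetilde S_T^1 \times S_T^2}$ since $\widetilde S_T^1 \subset S_T^2$, and the sum $\sum_{T' \subset S_T^1} \iint_{\widetilde S_T^1 \times \widetilde S_{T'}^1}$ is $\le C \iint_{\widetilde S_T^1 \times S_T^2}$ by finite overlap of $\{\widetilde S_{T'}^1\}_{T' \subset S_T^1}$ together with $\bigcup_{T'} \widetilde S_{T'}^1 \subset S_T^2$. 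The main technical obstacle is the case analysis needed to verify $R_i \subset \widetilde S_\bullet^1$: the definition of $R_i$ differs according to whether $\x_i \in \partial\Omega$, and the definition of $\widetilde S_\bullet^1$ differs according to whether the underlying element lies in $\Omega$ or in $\Omega^c$; one must check each combination (vertex in $\partial\Omega$ vs.\ strict interior/exterior, paired with $T' \subset \Omega$ vs.\ $T' \subset \Omega^c$) to ensure the averaging regions sit inside the correct modified ring, so that the double integrand on the right-hand side is well-posed within the seminorm $|\cdot|_\X$.
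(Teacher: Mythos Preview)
Your proof is correct and follows essentially the same approach as the paper: both subtract the constant $c=\frac{1}{|\widetilde S_T^1|}\int_{\widetilde S_T^1} v$, apply Lemma~\ref{lem:stabIh} to $v-c$, and then control the resulting vertex averages. The only stylistic difference is that the paper passes through an intermediate $L^2$ bound (Jensen gives $\sum_i (\frac{1}{|R_i|}\int_{R_i}(v-c))^2 \le Ch_T^{-d}\|v-c\|_{L^2(\widetilde S_T^1\cup\widetilde S_{T'}^1)}^2$) and then invokes a fractional Poincar\'e inequality for the $\widetilde S_T^1$ piece, whereas you insert the kernel weight directly via Cauchy--Schwarz on each vertex term---which amounts to proving the needed Poincar\'e estimate inline rather than citing it.
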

\begin{proof}
	Let $T$ and $T'$ be any two elements as in the hypothesis, $v \in \mathbb{V}$ and $c \in \R$ a constant to be determined. Because $\cup_{\x_i \in T \cup T'} R_i \subset \widetilde{S}_T^1 \cup \widetilde{S}_{T'}^1$ and $|R_i| \simeq h_T^d$ for every node $\x_i \in T \cup T'$, applying the Jensen's inequality we have
	\[
	\sum_{i: \x_i \in T \cup T'} \left( \frac1{|R_i|} \int_{R_i} v-c \right)^2 \le \frac{C}{h_T^d} \int_{\widetilde{S}_T^1 \cup \widetilde{S}_{T'}^1} (v-c)^2.
	\]
	Combining this bound with Lemma \ref{lem:stabIh} and the fact that $I_h (v-c) (x) - I_h (v-c)(y) = I_h v(x) - I_h v(y)$ for all $x\in T, y \in T'$, we get
	\begin{equation}\label{eq:stab-Ih-TT'}
	\iint_{T \times T'} \frac{ |I_h v(x) - I_h v(y) |^2}{|x-y|^{d+2s}} dy dx \le \frac{C}{\fran{h_T^{2s}}} \int_{\widetilde{S}_T^1 \cup \widetilde{S}_{T'}^1} (v-c)^2.
	\end{equation}
	
	We now choose $c = |\widetilde{S}_T^1|^{-1} \int_{\widetilde{S}_T^1} v$, so that we can apply the Poincar\'e inequality
	\begin{equation}\label{eq:stab-Ih-T}
	\int_{\widetilde{S}_T^1} (v-c)^2 \le C \fran{h_T^{2s}} |v|_{H^s(\widetilde{S}_T^1)}^2.
	\end{equation}
	\fran{The constant $C$ above depends on the chunkiness of $\widetilde{S}_T^1$ (see for example \cite[Proposition 1.2.6]{tesisbortha}).}
	Our choice of $c$ yields
	\[
	\int_{\widetilde{S}_{T'}^1} (v-c)^2 = \int_{\widetilde{S}_{T'}^1} \left( \frac{1}{|\widetilde{S}_{T}^1|}\int_{\widetilde{S}_{T}^1} v(x) - v(y) dy \right)^2 dx \le
	\frac{1}{|\widetilde{S}_{T}^1|}  \iint_{\widetilde{S}_{T'}^1 \times \widetilde{S}_{T}^1} |v(x) - v(y)|^2 dy dx
	\]
	and therefore, since $|x-y| \le C h_T$ for all $x\in \widetilde{S}_{T'}^1$, $y \in \widetilde{S}_{T}^1$ and $|\widetilde{S}_{T}^1| \simeq h_T^d$, we obtain
	\begin{equation}\label{eq:stab-Ih-T'}
	\int_{\widetilde{S}_{T'}^1} (v-c)^2 \le C h_T^{2s} \iint_{\widetilde{S}_T^1 \times \widetilde{S}_{T'}^1} \frac{ | v(x) - v(y) |^2}{|x-y|^{d+2s}} dy dx.
	\end{equation}.
	
	We obtain estimate \eqref{eq:stability-Ih-TT'} by combining \eqref{eq:stab-Ih-TT'}, \eqref{eq:stab-Ih-T} and \eqref{eq:stab-Ih-T'}. 	
	Summing up \eqref{eq:stability-Ih-TT'} over the elements $T' \subset S^1_T$, whose total number is less than $C_\sigma$, we immediately obtain \eqref{eq:bdry-stability-Ih}.
\end{proof}

\begin{remark}[averages]\label{rmk:promedios}
	One can readily verify that, given any two sets $A$ and $B$ and $v \in L^1(A\cup B)$, 
	\[
	\frac{1}{|A|} \int_A v(x) dx - \frac{1}{|B|} \int_B v(y) dy = \frac{1}{|A||B|} \int_A \int_B (v(x)-v(y)) dy dx .
	\]
\end{remark}

We now express the stability of $I_h$ in a way that shall be convenient to deal with elements away from one another.

\begin{lemma}[stability on non-touching elements] \label{lemma:long-range-stability}
	Let $T$ and $T'$ be any two elements such that $T \cap T' = \emptyset$. Then, for every $v \in L^2(S_T^1 \cup S_{T'}^1)$ it holds that
	\begin{equation}\label{eq:stability-Ih-TT'-far}
	\int_T \int_{T'} |I_h v(x) - I_h v(y)|^2 dy dx \le C \int_{S_T^1} \int_{S_{T'}^1} |v(x) - v(y)|^2 dy dx.
	\end{equation}
	As a consequence, given $T \in \calT_h$ it holds that
	\begin{equation*} \label{eq:long-range-stability-Ih}
	\int_T \int_{(S_T^1)^c}  \frac{|I_h v (x) - I_h v (y)|^2}{|x-y |^{d+2s}} dy dx \le C \int_{S_T^1} \int_{\R^d}   \frac{|v (x) - v (y)|^2}{|x-y |^{d+2s}} dy dx  \quad \forall v \in L^2_{loc}(\R^d).
	\end{equation*}
\end{lemma}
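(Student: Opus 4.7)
My plan is to prove the two inequalities in turn, since the second is advertised as a consequence of the first. The key idea for the first inequality is to use the partition of unity twice, and reduce the estimate to a sum of squared averaging-differences $(a_i-a_j)^2$ with $a_i := |R_i|^{-1}\int_{R_i}v$, much in the spirit of Lemma \ref{lem:stabIh}.

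For the first inequality, since $T \cap T' = \emptyset$, no node lies in both elements. I would write
\[
I_h v(x) - I_h v(y) \;=\; \sum_{\x_i \in T} a_i\,\phii_i(x) - \sum_{\x_j \in T'} a_j\,\phii_j(y)
\;=\; \sum_{\x_i\in T,\,\x_j\in T'} \phii_i(x)\phii_j(y)(a_i-a_j),
\]
using $\sum_{\x_i \in T}\phii_i(x)=\sum_{\x_j\in T'}\phii_j(y)=1$ on $T$ and $T'$ respectively. Cauchy--Schwarz in the finite sum gives $|I_h v(x) - I_h v(y)|^2 \le C\sum_{i,j}\phii_i(x)^2\phii_j(y)^2 (a_i-a_j)^2$. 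Now Remark \ref{rmk:promedios} together with Jensen's inequality yields $(a_i-a_j)^2 \le \frac{1}{|R_i||R_j|}\int_{R_i}\int_{R_j}|v(z)-v(w)|^2\,dw\,dz$. Integrating over $T\times T'$ and using $\int_T \phii_i^2 \le |T|\simeq h_T^d \simeq |R_i|$ (and likewise on $T'$), the $|R_i||R_j|$ factor cancels. Since $R_i \subset S_T^1$ for $\x_i\in T$, $R_j\subset S_{T'}^1$ for $\x_j\in T'$, and the number of nodes per element is bounded by $d+1$, one concludes \eqref{eq:stability-Ih-TT'-far}.

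For the second inequality, the main work is a geometric comparability argument. The plan is to decompose $(S_T^1)^c$ as the union of those $T'\in\calT_h$ satisfying $T'\cap S_T^1 = \emptyset$ (treating $\Lambda_H^c$ as a region where $I_h v$ is constant, or equivalently, extending the partition to include a ``far'' piece). The crucial observation is that, by shape regularity, if $T'\cap S_T^1=\emptyset$ then any path from $T$ to $T'$ must cross at least two distinct elements --- one adjacent to $T$ (hence of diameter $\simeq h_T$) and one adjacent to $T'$ (of diameter $\simeq h_{T'}$) --- so that $d(T,T')\gtrsim h_T+h_{T'}$. Combined with $\operatorname{diam}(S_T^1)\lesssim h_T$ and $\operatorname{diam}(S_{T'}^1)\lesssim h_{T'}$, this yields $|x-y|\simeq |z-w|$ uniformly for $x\in T$, $y\in T'$, $z\in S_T^1$, $w\in S_{T'}^1$.

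With this comparability in hand, I would divide the pointwise bound from Step 1 by $|x-y|^{d+2s}$, replace it by $|z-w|^{d+2s}$ inside the double average, and integrate to obtain
\[
\int_T\int_{T'}\frac{|I_h v(x)-I_h v(y)|^2}{|x-y|^{d+2s}}\,dy\,dx \;\le\; C\int_{S_T^1}\int_{S_{T'}^1}\frac{|v(z)-v(w)|^2}{|z-w|^{d+2s}}\,dw\,dz.
\]
Finally, summing over all elements $T'\subset (S_T^1)^c$ and invoking the finite-overlap property of the stars $\{S_{T'}^1\}$ (a consequence of shape regularity) yields the claimed bound by $\int_{S_T^1}\int_{\R^d}\frac{|v(z)-v(w)|^2}{|z-w|^{d+2s}}dw\,dz$. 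The main obstacle I expect is verifying the geometric lemma $d(T,T')\gtrsim h_T+h_{T'}$ with constants depending only on $\sigma$, which requires careful handling of possibly graded meshes where neighboring elements may differ in size.
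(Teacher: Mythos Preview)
Your argument for \eqref{eq:stability-Ih-TT'-far} is correct and in fact more streamlined than the paper's. The paper does not use the double partition of unity $\sum_{i,j}\phii_i(x)\phii_j(y)(a_i-a_j)$; instead it inserts and subtracts the averages $|S_{T'}^1|^{-1}\int_{S_{T'}^1}v$ and $|S_T^1|^{-1}\int_{S_T^1}v$ into the two sums, producing three terms $A_1,A_2,A_3$ (one for each sum and one cross term), and bounds each of them separately via Remark~\ref{rmk:promedios} and Jensen. Your approach avoids this splitting and goes directly to the pairwise differences $(a_i-a_j)^2$; it is shorter and makes the dependence on $R_i\subset S_T^1$, $R_j\subset S_{T'}^1$ transparent. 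Both routes land on the same inequality with constants depending only on $d$ and $\sigma$.

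For the second inequality your strategy coincides with the paper's: divide by $|x-y|^{d+2s}$, use $|x-y|\simeq|z-w|$, and sum over elements outside $S_T^1$ with finite overlap of the stars, treating $\Lambda_H^c$ via $I_hv\equiv 0$ there. One small correction: the decomposition of $(S_T^1)^c$ should run over elements with $T'\cap T=\emptyset$, not $T'\cap S_T^1=\emptyset$; second-ring elements (those touching $S_T^1$ but not $T$) still lie in $(S_T^1)^c$ up to a set of measure zero and must be included. For those $T'$ your ``path through two elements'' heuristic does not apply literally, but the needed bound $d(T,T')\gtrsim h_T$ (and, by symmetry, $\gtrsim h_{T'}$, hence $\gtrsim h_T+h_{T'}$) still follows from shape regularity, since disjoint closed simplices in a conforming mesh share no vertex and neighboring elements have comparable diameters. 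The paper simply asserts $d(T,T')\ge C h_T$ from shape regularity; your flag that this requires care on graded meshes is well placed.
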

\begin{proof}
	Let $T,T'$ be any two disjoint elements. Thus, $\# \{ \x_i \in T \cup T' \} = 2(d+1)$, and we can consider a local node numbering such that $\x_1, \ldots, \x_{d+1} \in T$ and $\x_{d+2}, \ldots, \x_{2(d+1)} \in T'$. We write, for $x \in T$ and $y \in T'$,
	\[ \begin{aligned}
	I_h v (x) - I_h v(y) & = \sum_{i =1}^{d+1} \left( \frac1{|R_i|} \int_{R_i} v \right) \phii_i(x) - \sum_{i = d+2}^{2(d+1)} \left( \frac1{|R_i|} \int_{R_i} v \right) \phii_i(y) \\
	& = \sum_{i =1}^{d+1} \left( \frac1{|R_i|} \int_{R_i} v - \frac1{|S_{T'}^1|} \int_{S_{T'}^1} v \right) \phii_i(x) - \sum_{i = d+2}^{2(d+1)} \left( \frac1{|R_i|} \int_{R_i} v  - \frac1{|S_{T}^1|} \int_{S_{T}^1} v\right) \phii_i(y) \\
	& \quad + \frac1{|S_{T'}^1|} \int_{S_{T'}^1} v - \frac1{|S_{T}^1|} \int_{S_{T}^1} v.
	\end{aligned} \]
	
	Therefore, we can bound
	\begin{equation} \label{eq:bound-stab-Ih-far}
	|I_h v (x) - I_h v(y)|^2 \le 3 (A_1^2 + A_2^2 + A_3^2),
	\end{equation}
	with
	\[ \begin{aligned}
	& A_1 =  \sum_{i =1}^{d+1} \left( \frac1{|R_i|} \int_{R_i} v - \frac1{|S_{T'}^1|} \int_{S_{T'}^1} v \right) \phii_i(x), \quad  A_2 =  \sum_{i = d+2}^{2(d+1)} \left( \frac1{|R_i|} \int_{R_i} v  - \frac1{|S_{T}^1|} \int_{S_{T}^1} v\right) \phii_i(y) , \\
	& A_3 = \frac1{|S_{T'}^1|} \int_{S_{T'}^1} v - \frac1{|S_{T}^1|} \int_{S_{T}^1} v.
	\end{aligned} \]
	
	Because $|\phii_i|\le 1$, $|R_i| \simeq h_T^d \simeq |S_T^1|$ for all $i = 1, \ldots, d+1$ and $\cup_{i=1}^{d+1} R_i \subset S_T^1$, and by using Remark \ref{rmk:promedios} and the Jensen's inequality, we can bound
	\[
	A_1^2 \le C \sum_{i =1}^{d+1} \left( \frac1{|R_i|} \int_{R_i} v - \frac1{|S_{T'}^1|} \int_{S_{T'}^1} v \right)^2 \le \frac{C}{|S_{T}^1||S_{T'}^1|} \int_{S_T^1} \int_{S_{T'}^1} |v(t) - v(w)|^2 dt dw.
	\]
	
	In the same fashion, one readily obtains
	\[
	A_2^2, A_3^2 \le \frac{C}{|S_{T}^1||S_{T'}^1|} \int_{S_T^1} \int_{S_{T'}^1} |v(w) - v(t)|^2 dt dw,
	\]
	and collecting the bounds for the $A_j$'s and integrating \eqref{eq:bound-stab-Ih-far} over $T\times T'$, we readily obtain \eqref{eq:stability-Ih-TT'-far}.
	
	Naturally, $T \cap T' = \emptyset$ is equivalent to $T' \in (S_T^1)^c$ \fran{or $d(T, T') > 0$. Thus, we have
	\[
	|x-y| \ge d(T, T') > 0, \quad |t-w|\le Cd(T, T') \quad \forall x \in T, \ y \in T', \ t \in S_T^1, \ w \in S_{T'}^1,
	\]
and we can use \eqref{eq:stability-Ih-TT'-far} to write
	\[ \begin{aligned}
	\int_T \int_{T'}  \frac{|I_h v (x) - I_h v (y)|^2}{|x-y |^{d+2s}} dy dx & \le d(T, T')^{-(d+2s)} \int_T \int_{T'}  |I_h v (x) - I_h v (y)|^2  dy dx 	\\
& \le	C d(T, T')^{-(d+2s)}
	\int_{S_T^1} \int_{S_{T'}^1} |v (w) - v (t)|^2 dtdw \\ 
	&\le C 
	 \int_{S_T^1} \int_{S_{T'}^1} \frac{|v (w) - v (t)|^2}{|w-t|^{d+2s}} dtdw.
	\end{aligned} \]	
	}
Estimate \eqref{eq:stability-Ih-TT'-far} follows by summing on elements $T' \subset (S_T^1)^c \cap \Lambda_H$ and recalling that $I_h$ vanishes on $\Lambda_H^c$. 
\end{proof}

We shall also require the following auxiliary result, that is proved by means of the same kind of arguments as in \cite[Proposition 3.4]{BoLiNo19analysis}

\begin{lemma}[local $L^2$ interpolation error] \label{lemma:conv-L2c}
	Assume $v \in L_{loc}^\infty(\Rd)$. Then, if the computational domains $\{ \Lambda_H \}$ are taken according to \eqref{eq:defofLambda} with $H \to \infty$ as $h \to 0$, we have
	\[
	\| v- I_h v \|_{L^2_{loc}(\Rd)} \to 0, \quad \mbox{as } h \to 0.
	\]
\end{lemma}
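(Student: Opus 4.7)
The plan is to combine three standard ingredients in the Scott--Zhang / Cl\'ement style: density of $C_c(\Rd)$ in $L^2_{loc}(\Rd)$, a local $L^2$-stability bound for $I_h$, and uniform consistency of $I_h$ on continuous compactly supported functions. Since $v \in L^\infty_{loc}(\Rd) \subset L^2_{loc}(\Rd)$, we may freely approximate $v$ in the $L^2$-norm over compact sets by elements of $C_c(\Rd)$.

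The first step is to fix an arbitrary compact set $K \subset \Rd$ and, using the hypothesis $H(h) \to \infty$ as $h \to 0$, restrict to $h$ sufficiently small so that $K$ and an $\calO(h)$-enlargement $K' := \{x \in \Rd \colon d(x,K) \le 1\}$ are both contained in $\Lambda_H$. Then for every element $T \in \calT_h$ that meets $K$, all nodes $\x_i \in T$ satisfy $S_i \subset K'$, the identity $\sum_i \phii_i \equiv 1$ holds on $T$, and the value of $I_h v$ on $K$ depends only on $v|_{K'}$.

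The second step is the local $L^2$-stability estimate: for every $T \in \calT_h$ and every $w \in L^2(S_T^1)$, Jensen's inequality together with $|R_i| \simeq h_T^d \simeq |T|$ yields
\[
\|I_h w\|_{L^2(T)}^2 \le C h_T^d \sum_{\x_i \in T} \left( \frac{1}{|R_i|} \int_{R_i} w \right)^2 \le C \sum_{\x_i \in T} \|w\|_{L^2(R_i)}^2 \le C \|w\|_{L^2(S_T^1)}^2.
\]
Summing over the elements $T$ intersecting $K$ gives $\|I_h w\|_{L^2(K)} \le C \|w\|_{L^2(K')}$, with $C$ independent of $h$.

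The third step is consistency on continuous functions. For $\tilde v \in C_c(\Rd)$ and $x \in T$ with $T \cap K \ne \emptyset$, the partition of unity property on $T$ gives
\[
|I_h \tilde v(x) - \tilde v(x)| = \left| \sum_{\x_i \in T} \frac{1}{|R_i|} \int_{R_i} (\tilde v(y) - \tilde v(x)) \, dy \, \phii_i(x) \right| \le \omega_{\tilde v}(C h),
\]
where $\omega_{\tilde v}$ is the modulus of continuity of $\tilde v$. Uniform continuity of $\tilde v$ therefore yields $\|\tilde v - I_h \tilde v\|_{L^\infty(K)} \to 0$, and hence $\|\tilde v - I_h \tilde v\|_{L^2(K)} \to 0$, as $h \to 0$.

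To conclude, given $\eps > 0$ one chooses $\tilde v \in C_c(\Rd)$ with $\|v - \tilde v\|_{L^2(K')} < \eps$ and applies the triangle inequality together with the stability bound of the second step:
\[
\|v - I_h v\|_{L^2(K)} \le \|v - \tilde v\|_{L^2(K)} + \|\tilde v - I_h \tilde v\|_{L^2(K)} + \|I_h (\tilde v - v)\|_{L^2(K)} \le (1+C)\eps + o(1),
\]
which gives $\limsup_{h \to 0} \|v - I_h v\|_{L^2(K)} \le (1+C)\eps$ and, since $\eps$ is arbitrary, the claim. The only mildly delicate point is ensuring that for $h$ small the relevant portion of the mesh lies inside $\Lambda_H$ so that the partition of unity holds and $I_h v$ does not feel the artificial truncation $I_h v \equiv 0$ on $\Lambda_H^c$; this is precisely why the assumption $H \to \infty$ as $h \to 0$ is indispensable, but it is handled easily by the initial choice of $h$.
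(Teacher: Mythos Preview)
Your argument is correct, but it follows a different route from the paper's. The paper works pointwise: for a Lebesgue point $x$ of $v$ lying in an element $T$, the partition of unity gives
\[
|v(x) - I_h v(x)| \le \frac{C}{|B_{Ch_T}(x)|} \int_{B_{Ch_T}(x)} |v(x) - v(y)|\,dy,
\]
which tends to zero by the Lebesgue Differentiation Theorem; the hypothesis $v \in L^\infty_{loc}(\Rd)$ then supplies the dominating function $\|v\|_{L^\infty}$ needed to upgrade a.e.\ convergence to $L^2(K)$ convergence via dominated convergence. Your proof instead uses the classical density--stability--consistency triad: approximate $v$ in $L^2(K')$ by $\tilde v \in C_c(\Rd)$, use the local $L^2$-stability $\|I_h w\|_{L^2(K)} \le C\|w\|_{L^2(K')}$, and check that $I_h$ reproduces continuous functions uniformly. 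Both arguments are standard; yours has the mild advantage that it only uses $v \in L^2_{loc}(\Rd)$ and never actually needs the $L^\infty_{loc}$ bound, whereas the paper's dominated-convergence step genuinely relies on it. The paper's approach, on the other hand, is shorter and avoids introducing an auxiliary approximant.
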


\begin{proof}
	Let $K \subset \R^d$ be a bounded set and $x \in K$. Then, there exists $h_0$ sufficiently small such that $K \subset \Lambda_H$ for all $h < h_0$. Thus, we may assume that $x\in T$ for some $T \in \calT_h$. Furthermore, let us assume that $x$ is a Lebesgue point of $v$. Then, we have
	\[
	v(x) - I_h v (x) =  v(x) -  \sum_{i \colon \x_i \in T} \left(\frac1{|R_i|} \int_{R_i} v(y) dy \right) \phii_i (x) = \sum_{i \colon \x_i \in T} \left(\frac1{|R_i|} \int_{R_i} (v(x) -  v(y)) dy \right) \phii_i (x).
	\]
	We exploit that for all $i$\fran{,} $|\phii_i| \le 1$, $|R_i| \simeq |T| \simeq h_T^d$ by shape regularity. Also,  the definition of the region $R_i$ gives $R_i \subset S_i \subset S^1_T$ and, in turn, we have $S^1_T \subset B_{r}(x)$ with a radius $r = C h_T$. We get
	\[ \begin{aligned}
	|v(x) - I_h v (x)| & \le \frac{C}{h_T^d} \int_{\cup_i R_i} |v(x) -  v(y)| dy \le \frac{C}{h_T^d} \int_{B_{Ch_T}(x)} |v(x) -  v(y)| dy  \\
	& \le \frac{C}{|B_{Ch_T}(x)|} \int_{B_{Ch_T}(x)} |v(x) -  v(y)| dy \to 0 \quad \mbox{as } h \to 0,
	\end{aligned}\]
	because $x$ is a Lebesgue point of $v$. Therefore, by the Lebesgue Differentiation Theorem we deduce that $I_h v \to v$ a.e. \fran{in $K$}.
	
	Moreover, because $v \in L_{loc}^\infty(\Rd)$ we have $|I_h v| \le \| v \|_{L^\infty(K)}$ and since $|K|$ is finite we apply the Dominated Convergence Theorem to conclude that
	\[
	\lim_{h \to 0} \int_K |I_h v (x) - v(x)|^2 dx = 0.
	\]
	This finishes the proof.
\end{proof}

Finally, we have some estimates at infinity.

\begin{lemma}[tail of interpolation error] \label{lem:tail-interpolation}
	Let $R > 0$ be sufficiently large. Then, if $H > R$ and $h \le 1$, we have
	\[
	\int_\Omega \int_{\Lambda_R^c} \frac{|(v-I_h v)(x)-(v-I_h v)(y)|^2}{|x-y|^{d+2s}} \, dy dx \le C \left(\frac{\| v-I_h v \|_{L^2(\Omega)}^2}{R^{2s}} +  \int_{\Lambda_{R-1}^c} \frac{|v(y)|^2}{|y|^{d+2s}} \, dy \right)
	\]
\end{lemma}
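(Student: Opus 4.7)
The plan is to decouple the two variables by the elementary bound
\[
|(v-I_hv)(x)-(v-I_hv)(y)|^2 \le 2|(v-I_hv)(x)|^2 + 2|(v-I_hv)(y)|^2,
\]
which splits the double integral into two one-variable pieces. For the $x$-piece, I exploit the (symmetric) analogue of \eqref{eq:decay-Omega-Lambda}, namely $\int_{\Lambda_R^c} |x-y|^{-d-2s}\,dy \simeq R^{-2s}$ uniformly for $x \in \Omega$; integrating then yields
\[
\int_\Omega \int_{\Lambda_R^c} \frac{|(v-I_hv)(x)|^2}{|x-y|^{d+2s}}\,dy\,dx \le C R^{-2s} \|v - I_hv\|_{L^2(\Omega)}^2,
\]
which is the first term on the right.

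For the $y$-piece, estimate \eqref{eq:decay-Lambda-Omega} gives $\int_\Omega |x-y|^{-d-2s}\,dx \simeq |y|^{-d-2s}$ for $y \in \Lambda_R^c$, reducing matters to bounding $\int_{\Lambda_R^c} |y|^{-d-2s} |(v-I_hv)(y)|^2\,dy$. I split $|(v-I_hv)(y)|^2 \le 2|v(y)|^2 + 2|I_hv(y)|^2$. The $|v(y)|^2$ contribution is controlled directly by the second term of the claim, since $\Lambda_R^c \subset \Lambda_{R-1}^c$. For the $|I_hv(y)|^2$ contribution, I note that $I_h v \equiv 0$ on $\Lambda_H^c$, so I only need to work on mesh elements $T \subset \Lambda_H$ that meet $\Lambda_R^c$. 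On each such $T$, writing $I_h v$ as a partition-of-unity-weighted combination of the averages $|R_i|^{-1}\int_{R_i} v$ and applying Jensen's inequality yields, after integration in $y \in T$,
\[
\int_T \frac{|I_hv(y)|^2}{|y|^{d+2s}}\,dy \le C \sum_{\x_i \in T} \int_{R_i} \frac{|v(z)|^2}{|z|^{d+2s}}\,dz,
\]
where I have used that $|y| \simeq |z|$ for $y \in T$ and $z \in R_i$ with $\x_i \in T$, which holds because $T$ is far from the origin (a consequence of $R$ being large and $\mathrm{diam}(S_T^1) \le Ch \le C$). Summing over the relevant $T$ and using the shape-regular bound on the overlap of the $R_i$'s gives $\int_{\Lambda_R^c}|y|^{-d-2s}|I_hv(y)|^2 dy \le C \int_{\Lambda_{R-1}^c} |z|^{-d-2s}|v(z)|^2 dz$, completing the proof.

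The main subtlety to verify is that, under the hypotheses $h \le 1$ and ``$R$ sufficiently large'', every patch $R_i$ attached to a vertex of an element $T$ meeting $\Lambda_R^c$ is contained in $\Lambda_{R-1}^c$. This follows from shape regularity, which gives $\mathrm{diam}(S_i) \simeq h_T \le Ch$; the constant $C$ is absorbed into the implicit lower bound on $R$, ensuring the shift from $\Lambda_R^c$ to $\Lambda_{R-1}^c$ is enough to cover all averaging supports. A small related point is that $|y|^{-d-2s}$ and $(1+|y|^{d+2s})^{-1}$ are comparable on $\Lambda_{R-1}^c$ for $R$ large, so the two forms of the weight are interchangeable up to constants.
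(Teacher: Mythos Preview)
Your proposal is correct and follows essentially the same route as the paper: the same decoupling $|A-B|^2\le 2|A|^2+2|B|^2$, the same use of \eqref{eq:decay-Omega-Lambda} and \eqref{eq:decay-Lambda-Omega} for the $x$- and $y$-pieces, the further split of the $y$-piece into $|v|^2$ and $|I_hv|^2$, and the same elementwise Jensen argument (with $|y|\simeq|z|$ on patches) to pass from $I_hv$ back to $v$ on $\Lambda_{R-1}^c$. Your explicit discussion of why the averaging supports land in $\Lambda_{R-1}^c$ under $h\le 1$ is, if anything, slightly more careful than the paper's one-line ``summing up in all the elements and using that $h\le 1$''.
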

\begin{proof}
	We split
	\[ \begin{split}
	\int_\Omega \int_{\Lambda_R^c} \frac{|(v-I_h v)(x)-(v-I_h v)(y)|^2}{|x-y|^{d+2s}} \, dy dx & \le  2 \int_\Omega \int_{\Lambda_R^c} \frac{|(v-I_h v)(x)|^2}{|x-y|^{d+2s}} \, dy dx \\ 
	& + 2 \int_\Omega \int_{\Lambda_R^c} \frac{|v-I_h v)(y)|^2}{|x-y|^{d+2s}} \, dy dx.
	\end{split} \]
	Using \eqref{eq:decay-Omega-Lambda}, the first integral in the right hand side can be bounded by
	\[
	\int_\Omega \int_{\Lambda_R^c} \frac{|(v-I_h v)(x)|^2}{|x-y|^{d+2s}} \, dy dx \le \frac{C}{R^{2s}} \| v-I_h v \|_{L^2(\Omega)}^2.
	\]
	
	As for the second one, we now use \eqref{eq:decay-Lambda-Omega} to obtain
	\[
	\int_\Omega \int_{\Lambda_R^c} \frac{|(v-I_h v)(y)|^2}{|x-y|^{d+2s}} \, dy dx \le C \int_{\Lambda_R^c} \frac{|(v-I_h v)(y)|^2}{|y|^{d+2s}} \, dy \le C \left(\int_{\Lambda_R^c} \frac{|v(y)|^2}{|y|^{d+2s}} \, dy + \int_{\Lambda_R^c} \frac{|I_h v(y)|^2}{|y|^{d+2s}} \, dy \right),
	\]
	and because $I_h v$ vanishes on $\Lambda_H^c$, we have
	\[
	\int_{\Lambda_R^c} \frac{|I_h v(y)|^2}{|y|^{d+2s}} \, dy = \int_{\Lambda_H\setminus \Lambda_R} \frac{|I_h v(y)|^2}{|y|^{d+2s}} \, dy.
	\]
	Take any element $T \in \calT_h$ such that $T \cap (\Lambda_H\setminus \Lambda_R) \neq \emptyset$. For $y \in T$ we thus have
	\[
	|I_h v (y)|^2 \le C \sum_{i \colon \x_i \in T} \left(\frac1{|B_i|} \int_{B_i} v^2 \right),
	\]
	and because \fran{$|B_i| \simeq h^d_T$} and $|z| \simeq |y|$ for all $y \in T$, $z \in S_T^1$, we can write
	\[
	\int_{\fran{T\cap(\Lambda_H\setminus \Lambda_R)}} \frac{|I_h v(y)|^2}{|y|^{d+2s}} \, dy \le C \int_{S_T^1} \frac{|v(z)|^2}{|z|^{d+2s}} dz.
	\]
	Summing up in all the elements and using that $h \le 1$, we conclude that
	\[
	\int_{\Lambda_R^c} \frac{|I_h v(y)|^2}{|y|^{d+2s}} \, dy \le C \int_{\Lambda_{R-1}^c} \frac{|v(y)|^2}{|y|^{d+2s}} \, dy.
	\]
\end{proof}

\subsection{Convergence}

We next prove the convergence of the finite element approximations by combining the various interpolation estimates derived in last section with the regularity of solutions. We require solutions to be locally bounded.

\begin{theorem}[convergence] \label{thm:convergence}
	Let $s \in (0,1)$, $\alpha > 0$, $f \in L^2(\Omega)$, 
\fran{$g \in [L^2_{s} (\Omega^c)]'$}	for some $\beta \in (0,s)$ and $u$ be the solution to \eqref{eq:weak_form}. Let $u_h$ be the finite element solution computed on a mesh with size $h = \max_{T \in \calT_h} h_T$. Then, \fran{if the computational domains $\{ \Lambda_H \}$ are taken according to \eqref{eq:defofLambda} with $H \to \infty$ as $h \to 0$ and}	
	assuming $u \in L_{loc}^\infty(\overline{\Omega^c})$, it holds that
	\[
	\lim_{h \to 0} \| u - u_h \|_\V = 0 .
	\]
\end{theorem}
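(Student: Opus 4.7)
The plan is to apply the best-approximation estimate \eqref{eq:best-approximation} with $v_h = I_h u$, so that the theorem reduces to proving $\|u - I_h u\|_\V \to 0$. Fix $\eta > 0$; I would show $\|u - I_h u\|_\V \le C\eta$ for every $h$ below some threshold $h_0 = h_0(\eta)$, with $C$ independent of $h$ and $\eta$, and then let $\eta \to 0$.

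First I would isolate the tail. Split the $\X$-seminorm of $u - I_h u$ according to whether the second argument lies in $\Lambda_R$ or $\Lambda_R^c$, and apply Lemma \ref{lem:tail-interpolation} to the latter piece, obtaining
\[
C R^{-2s}\|u - I_h u\|_{L^2(\Omega)}^2 + C \int_{\Lambda_{R-1}^c} \frac{|u(y)|^2}{|y|^{d+2s}}\,dy .
\]
The hypothesis on $g$ is precisely what Proposition \ref{prop:further_decay} needs to conclude $u \in L^2_{s-\beta}(\R^d)$, and since $\beta > 0$ the integral $\int_{\R^d} |u|^2 / (1+|y|^{d+2s})\, dy$ converges; consequently its tail can be forced below $\eta^2$ by choosing $R = R(\eta)$ large. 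The remaining factor $R^{-2s}\|u - I_h u\|_{L^2(\Omega)}^2$ will be absorbed once the $L^2$ error is under control.

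Second, I would split the bounded contribution element-by-element. Decompose the triangulation restricted to $\Lambda_R$ into interior elements $T \subset \Omega'$ for a fixed $\Omega' \Subset \Omega$, boundary elements $T$ with $T \cap \pp \Omega \ne \emptyset$, and exterior elements $T \subset \overline{\Omega^c} \cap \Lambda_R$. On interior elements, Theorem \ref{thm:interior_regularity} provides $u \in H^{2s-\eps}(\Omega')$; the local estimates \eqref{eq:L2-interpolation-error}--\eqref{eq:Hs-interpolation-error} (with $t = \min\{1,2s-\eps\}$) together with Lemma \ref{lemma:long-range-stability} for non-touching pairs yield an explicit $h$-rate. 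On boundary elements, I would invoke the stability estimate \eqref{eq:bdry-stability-Ih} and Lemma \ref{lemma:long-range-stability}, which bound the discrete seminorm by the $\X$-seminorm of $u$ localized to an $h$-neighborhood of $\pp \Omega$; this contribution vanishes with $h$ by absolute continuity of the defining double integral of $|u|_\X^2$. On exterior elements, the assumption $u \in L_{loc}^\infty(\overline{\Omega^c})$ triggers Lemma \ref{lemma:conv-L2c}, giving local $L^2$ convergence, while stability again controls the cross-contributions. The $L^2(\Omega)$ error is handled analogously: interior regularity plus \eqref{eq:L2-interpolation-error} on $\Omega'$, and stability of $I_h$ together with smallness of $\|u\|_{L^2(\Omega \setminus \Omega')}$ on the boundary strip.

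The main obstacle is precisely the boundary strip: the only a priori information available there is $|u|_\X < \infty$, so the argument must genuinely exploit the absolute continuity of the double integral defining $|u|_\X^2$ over a set whose measure shrinks with $h$. Once this is in place, a diagonal choice of parameters (first $\eta$, then $R$, then $\Omega'$, then $h_0$) yields $\limsup_{h\to 0}\|u - I_h u\|_\V \le C\eta$, and sending $\eta \to 0$ completes the proof.
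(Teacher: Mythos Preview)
Your outline is essentially the paper's own proof: reduce to $\|u-I_hu\|_\V$ via \eqref{eq:best-approximation}, peel off the tail with Lemma~\ref{lem:tail-interpolation} and the decay from Proposition~\ref{prop:further_decay}, use interior regularity (Theorem~\ref{thm:interior_regularity}) plus \eqref{eq:L2-interpolation-error}--\eqref{eq:Hs-interpolation-error} on a fixed compact $\Omega'\Subset\Omega$, use Lemma~\ref{lemma:conv-L2c} on the exterior, and handle the boundary strip by stability (Lemmas~\ref{lemma:bry-stab-Ih}, \ref{lemma:long-range-stability}) together with absolute continuity of $|u|_\X^2$.

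Two small corrections. First, the $L^2(\Omega)$ error needs no interior/boundary split: estimate \eqref{eq:L2-interpolation-error} with $t=s$ already applies on \emph{every} $T\subset\overline\Omega$ because $u\in H^s(\Omega)$, giving $\|u-I_hu\|_{L^2(\Omega)}\le Ch^s|u|_{H^s(\Omega)}$ directly (this is exactly \eqref{eq:interpolation-error-Omega} in the paper). Your suggested alternative, ``smallness of $\|u\|_{L^2(\Omega\setminus\Omega')}$'', is not available---$u$ need not be small near $\partial\Omega$. Second, the ``boundary strip'' in the $\X$-seminorm argument must be the \emph{fixed-width} layer $\Omega\setminus\Omega'$ (the paper writes $\Omega\setminus\Omega_\delta$), chosen after $\eta$ so that $\int_{\Omega\setminus\Omega_{2\delta}}\int_{\R^d}|u(x)-u(y)|^2|x-y|^{-d-2s}\,dy\,dx<\eta^2$; the stability lemmas then bound the $I_hu$ contribution over that strip by this same quantity. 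Your phrase ``localized to an $h$-neighbourhood of $\partial\Omega$'' is inconsistent with your own diagonal scheme (``first $\eta$, then $R$, then $\Omega'$, then $h_0$''), which is the correct order. Aside from these slips, the argument is the paper's.
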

\begin{proof} Because of the best approximation property \eqref{eq:best-approximation}, it suffices to estimate the interpolation error. Clearly, using \eqref{eq:L2-interpolation-error}, we immediately deduce that the $L^2$-interpolation error over $\Omega$ tends to zero. Namely,
	\begin{equation} \label{eq:interpolation-error-Omega}
	\begin{split}
	\| u - I_h u \|_{L^2(\Omega)}^2 & = \sum_{T \subset \overline\Omega}\| u - I_h u \|_{L^2(T)}^2 \\ 
	& \le C  \sum_{T \subset \overline\Omega} h_T^{2s} |u|_{H^s(S_T^1 \cap \Omega)}^2 \le C h^{2s}|u|_{H^s(\Omega)}^2 \to 0, \quad \mbox{as } h \to 0,
	\end{split}	\end{equation}
	where we recall that the family $\{ \calT_h \}$ is assumed to mesh $\overline \Omega$ exactly.
	
	In order to estimate the interpolation error in the $\X$-seminorm, we let $\eps > 0$ be any positive number. Because $u \in \V$, there exist $\delta > 0$ and $R >0$ such that
	\begin{equation} \label{eq:chico-u} \begin{split}
	& \int_{\Omega\setminus\Omega_{2\delta}} \int_{\R^d} \frac{|u(x)-u(y)|^2}{|x-y|^{d+2s}} dy dx  < \eps, \\
	& \int_{\Omega} \int_{\Lambda_{R-1}^c} \frac{|u(x)-u(y)|^2}{|x-y|^{d+2s}} dy dx  < \eps,
	\end{split}	\end{equation}
	where we introduced the notation 
	\[
	\Omega_r = \{ x \in \Omega \colon d(x,\pp\Omega) \ge r \}, \quad r > 0 .
	\]
	For convenience, we shall denote $\calT_h^r = \{ T \in \calT_h \colon T \cap \Omega_r \neq \emptyset \}$ and, without loss of generality, assume  that $h \le \delta /8 \le 1$ and $H > R$. We decompose the $\X$-seminorm as
	\begin{equation} \label{eq:splitting-X}
	|u - I_h u|_\X^2 \le 2 \int_{\Omega} \int_{\R^d} \frac{|(u - I_h u) (x) - (u - I_h u) (y)|^2}{|x-y |^{d+2s}} \; dy \; dx = 2 I_1 + 2 I_2 +  2 I_3,
	\end{equation}
	where
	\[ \begin{split}
	&	I_1 = \int_{\Omega_\delta} \int_{\Lambda_R} \frac{|(u - I_h u) (x) - (u - I_h u) (y)|^2}{|x-y |^{d+2s}} \; dy \; dx, \\
	&	I_2 = \int_{\Omega_\delta} \int_{\Lambda_R^c} \frac{|(u - I_h u) (x) - (u - I_h u) (y)|^2}{|x-y |^{d+2s}} \; dy \; dx,\\
	&	I_3 = \int_{\Omega \setminus \Omega_\delta} \int_{\Rd} \frac{|(u - I_h u) (x) - (u - I_h u) (y)|^2}{|x-y |^{d+2s}} \; dy \; dx.
	\end{split} \]

	Let us first consider the term $I_1$ above, that can be bounded as
	\begin{equation} \label{eq:split-I1} \begin{aligned}
	I_1 \le & \sum_{T \in \calT_h^\delta} \int_T \int_{S^1_T} \frac{|(u - I_h u) (x) - (u - I_h u) (y)|^2}{|x-y |^{d+2s}} \; dy \; dx \\ & + \sum_{T \in \calT_h^\delta} \int_T \int_{(S^1_T)^c \cap \Lambda_R} \frac{|(u - I_h u) (x) - (u - I_h u) (y)|^2}{|x-y |^{d+2s}} \; dy \; dx .
	\end{aligned} \end{equation}
	By Theorem \ref{thm:interior_regularity}, we have $u \in \cap_{\sigma > 0} H^{2s-\sigma}_{loc}(\Omega)$. Therefore, fixing some $\sigma \in (0,s)$ and applying \eqref{eq:Hs-interpolation-error}, we obtain
	\begin{equation} \label{eq:split-I1-patch} \begin{aligned}
	\sum_{T \in \calT_h^\delta} \int_T \int_{S^1_T} \frac{|(u - I_h u) (x) - (u - I_h u) (y)|^2}{|x-y |^{d+2s}} \; dy \; dx & \le C \sum_{T \in \calT_h^\delta} h_T^{2(s-\sigma)} | u |_{H^{2s-\sigma}(S^2_T)}^2 \\
	&  \le C h^{2(s-\sigma)} | u |_{H^{2s-\sigma}(\Omega_{\delta/2})}^2 
	\to 0 , \quad \mbox{as } h \to 0.
	\end{aligned} \end{equation}
	
	To deal with the second sum in \eqref{eq:split-I1}, we split it as
	\begin{equation} \begin{aligned} \label{eq:split-far-I1}
	\sum_{T \in\calT_h^\delta} & \int_T \int_{(S^1_T)^c \cap \Lambda_R} \frac{|(u - I_h u) (x) - (u - I_h u) (y)|^2}{|x-y |^{d+2s}} \; dy \; dx \\ 
	& \le 2 \sum_{T \in \calT_h^\delta} \int_T \int_{(S^1_T)^c} \frac{|(u - I_h u) (x)|^2}{|x-y |^{d+2s}} \; dy \; dx + 2 \sum_{T \in \calT_h^\delta} \int_T \int_{(S^1_T)^c \cap \Lambda_R} \frac{|(u - I_h u) (y)|^2}{|x-y |^{d+2s}} \; dy \; dx ,
	\end{aligned} \end{equation}
	and also  remark that, for every $T \in \calT_h$ and $x \in T$,
	\begin{equation} \label{eq:polares-patch}
	\int_{(S^1_T)^c} \frac{1}{|x-y |^{d+2s}} \; dy \le \frac{C}{h_T^{2s}}.
	\end{equation}
	
	For the first sum in the right hand side in \eqref{eq:split-far-I1}, we exploit \eqref{eq:polares-patch}, apply \eqref{eq:L2-interpolation-error} and use the interior $H^{2s-\sigma}$-regularity of $u$ from Theorem \ref{thm:interior_regularity} to deduce
	\[ \begin{aligned}
	\sum_{T \in  \calT_h^\delta} \int_T \int_{(S^1_T)^c} \frac{|(u - I_h u) (x)|^2}{|x-y |^{d+2s}} \; dy \; dx & \le C \sum_{T \in \calT_h^\delta}  \frac{ \| u - I_h u \|^2_{L^2(T)}}{h_T^{2s}}
	\\ & \le C \sum_{T \in\calT_h^\delta} h_T^{2(s-\sigma)} | u |_{H^{2s-\sigma}(S^2_T)}^2 \to 0 , \quad \mbox{as } h \to 0.
	\end{aligned} \]
	
	We can deal with the last sum in \eqref{eq:split-far-I1} by using Lemma \ref{lemma:symmetry}. Indeed, by applying it and using \eqref{eq:polares-patch}, we get
	\begin{equation} \label{eq:split-delta-y}\begin{aligned}
	\sum_{T \in \calT_h^\delta} \int_T \int_{(S^1_T)^c \cap \Lambda_R} \frac{|(u - I_h u) (y)|^2}{|x-y |^{d+2s}} \; dy \; dx & \le 
	\sum_{T \in \calT_h \colon T \cap \Lambda_R \neq \emptyset} \int_T \int_{(S^1_T)^c \cap \Lambda_R} \frac{|(u - I_h u) (y)|^2 \chi_{\Omega_{3\delta/4}}(x)}{|x-y |^{d+2s}} \; dy \; dx \\
	& = \sum_{T \in \calT_h \colon T \cap \Lambda_R \neq \emptyset} \int_T \int_{(S^1_T)^c \cap \Lambda_R} \frac{|(u - I_h u) (x)|^2 \chi_{\Omega_{3\delta/4}}(y)}{|x-y |^{d+2s}} \; dy \; dx \\
	& \le  C \sum_{T \in \calT_h \colon T \cap \Lambda_R \neq \emptyset} \frac{\| u - I_h u \|_{L^2(T)}^2}{d(T, (S^1_T)^c\cap\Omega_{3\delta/4})^{2s}}. 
	\end{aligned} \end{equation}
	
	We now distinguish three cases in the last sum above. For the elements contained in $\Omega^c$, we use Lemma \ref{lemma:conv-L2c} and the fact that $d(T, (S^1_T)^c\cap\Omega_{3\delta/4}) \ge \delta/4$ if $T \subset \Omega^c$, to deduce that
	\begin{equation} \label{eq:split-delta-y-c}
	\sum_{T \in \calT_h \colon T \subset \Omega^c, T \cap \Lambda_R \neq \emptyset} \frac{\| u - I_h u \|_{L^2(T)}^2}{d(T, (S^1_T)^c\cap\Omega_{3\delta/4})^{2s}} \le C \frac{\| u - I_h u \|_{L^2(\Lambda_{R+1})}^2}{\delta^{2s}} \to 0, \quad \mbox{as } h \to 0.
	\end{equation}
	
	The elements in $\calT_h^\delta$ can be treated by using Theorem \ref{thm:interior_regularity} and \eqref{eq:L2-interpolation-error},
	\begin{equation} \label{eq:split-delta-y-delta}
	\sum_{T \in \calT_h^\delta} \frac{\| u - I_h u \|_{L^2(T)}^2}{d(T, (S^1_T)^c\cap\Omega_{3\delta/4})^{2s}} \le C \sum_{T \in \calT_h^\delta} \frac{\| u - I_h u \|_{L^2(T)}^2}{h_T^{2s}} \le C \sum_{T \in \calT_h^\delta} h_T^{2(s-\sigma)} | u |_{H^{2s-\sigma}(S^1_T)}^2 \to 0,
	\end{equation}
	as $h \to 0.$
	
	For those elements contained in $\Omega$ but not belonging to $\calT_h^\delta$, we also use \eqref{eq:L2-interpolation-error}, but now we critically exploit the choice of $\delta$ in \eqref{eq:chico-u},
	\begin{equation} \label{eq:split-delta-y-bdry} \begin{aligned}
	\sum_{T \in \calT_h \setminus \calT_h^\delta \colon T \subset \overline\Omega} \frac{\| u - I_h u \|_{L^2(T)}^2}{d(T, (S^1_T)^c\cap\Omega_{3\delta/4})^{2s}} & \le C \sum_{T \in \calT_h \setminus \calT_h^\delta \colon T \subset \overline\Omega} \frac{\| u - I_h u \|_{L^2(T)}^2}{h_T^{2s}} \\ 
	& \le C \sum_{T \in \calT_h \setminus \calT_h^\delta \colon T \subset \overline\Omega} | u |_{H^s(S_T^1 \cap \Omega)}^2 \\
	& \le C |u |_{H^s(\Omega \setminus \Omega_{2\delta})}^2 < C \eps.
	\end{aligned} \end{equation}
	
	Substituting \eqref{eq:split-delta-y-c}, \eqref{eq:split-delta-y-delta} and \eqref{eq:split-delta-y-bdry} in \eqref{eq:split-delta-y}, we deduce that
	\[
	\sum_{T \in \calT_h^\delta} \int_T \int_{(S^1_T)^c \cap \Lambda_R} \frac{|(u - I_h u) (y)|^2}{|x-y |^{d+2s}} \; dy \; dx \le
	C \eps + \mathcal{O}(1),
	\]
	and in turn, combining this estimate with \eqref{eq:split-I1-patch} and going back to \eqref{eq:split-I1}, we obtain
	\begin{equation} \label{eq:bound-I1}
	I_1 \le C \eps + \mathcal{O}(1).
	\end{equation}
	
	Next, we analyze the term $I_2$ in \eqref{eq:splitting-X}, which involves interactions between $\Omega_\delta$ and the unbounded set $\Lambda_R^c$. For that purpose, we combine Lemma \ref{lem:tail-interpolation} with \eqref{eq:interpolation-error-Omega} and \eqref{eq:chico-u}
	\begin{equation} \label{eq:bound-I2}
	I_2 \le C \left(\frac{\| u-I_h u \|_{L^2(\Omega)}^2}{R^{2s}} +  \int_{\Lambda_{R-1}^c} \frac{|u(y)|^2}{|y|^{d+2s}} \, dy \right) \le \mathcal{O}(1) +  C \eps.
	\end{equation}
	
	Let us finally consider the term $I_3$ in \eqref{eq:splitting-X}, which accounts for interactions between $\Omega \setminus \Omega_\delta$ --a boundary layer of width $\delta$ in $\Omega$-- and $\Rd$. Our argument needs to be of a different nature to the one that we performed for $I_1$ and $I_2$: now we cannot exploit interior regularity. Nevertheless, $I_3$ is expected to be small because it involves integration over a region whose contribution to the $\X$-seminorm of $u$ is roughly $\eps$ (cf. \eqref{eq:chico-u}). Thus, to deal with $I_3$ it suffices to exploit local stability properties of the interpolation operator $I_h$.
	
	Accordingly, we split $I_3$ as the sum of two integrals, one involving $u$ and another involving $I_h u$:
	\begin{equation} \begin{aligned} \label{eq:split-I3}
	I_3 & = \int_{\Omega \setminus \Omega_\delta} \int_{\R^d} \frac{|(u - I_h u) (x) - (u - I_h u) (y)|^2}{|x-y |^{d+2s}} \; dy \; dx \\
	& \le 2 \int_{\Omega \setminus \Omega_\delta} \int_{\R^d} \frac{|u (x) - u (y)|^2}{|x-y |^{d+2s}} \; dy \; dx
	+ 2 \int_{\Omega \setminus \Omega_\delta} \int_{\R^d} \frac{|I_h u (x) - I_h u (y)|^2}{|x-y |^{d+2s}} \; dy \; dx \\
	& \le 2 \eps + 2 \int_{\Omega \setminus \Omega_\delta} \int_{\R^d} \frac{|I_h u (x) - I_h u (y)|^2}{|x-y |^{d+2s}} \; dy \; dx.
	\end{aligned} \end{equation}
	
	We need to bound the last integral in the right hand side above. For that purpose, we observe that $\Omega \setminus \Omega_\delta \subset \{ T \in \calT_h \setminus \calT_h^{3\delta/2} \colon T \subset \Omega \}$ and decompose
	\[ \begin{aligned}
	\int_{\Omega \setminus \Omega_\delta} \int_{\R^d} \frac{|I_h u (x) - I_h u (y)|^2}{|x-y |^{d+2s}} \; dy \; dx \le
	& \sum_{T \in \calT_h \setminus \calT_h^{3\delta/2} \colon T \subset \overline\Omega}  \int_T \int_{S^1_T} \frac{|I_h u (x) - I_h u (y)|^2}{|x-y |^{d+2s}} \; dy \; dx \\
	& +  \sum_{T \in \calT_h  \setminus \calT_h^{3\delta/2} \colon T \subset \overline\Omega} \int_T \int_{(S^1_T)^c} \frac{|I_h u (x) - I_h u (y)|^2}{|x-y |^{d+2s}} \; dy \; dx.
	\end{aligned} \]
	
	We exploit Lemma \ref{lemma:bry-stab-Ih} and the assumption $h \le \delta/8$ to treat the first sum:
	\begin{equation} \label{eq:split-I3-patch} \begin{aligned}
	\sum_{T \in \calT_h  \setminus \calT_h^{3\delta/2} \colon T \subset \overline\Omega} & \int_T \int_{S^1_T} \frac{|I_h u (x) - I_h u (y)|^2}{|x-y |^{d+2s}} \; dy \; dx 
	& \le C \int_{\Omega \setminus \Omega_{2\delta}} \int_{\R^d} \frac{|u (x) - u (y)|^2}{|x-y |^{d+2s}} \; dy \; dx < C \eps .
	\end{aligned} \end{equation}
	
	Next, we apply Lemma \ref{lemma:long-range-stability} to deduce that
	\begin{equation} \label{eq:split-I3-away} \begin{aligned}
	\sum_{T \in \calT_h  \setminus \calT_h^{3\delta/2} \colon T \subset \overline\Omega}
	\int_T \int_{(S^1_T)^c} \frac{|I_h u (x) - I_h u (y)|^2}{|x-y |^{d+2s}} \; dy \; dx & \le C \sum_{T \in \calT_h  \setminus \calT_h^{3\delta/2} \colon T \subset \overline\Omega}
	\int_{S_T^1} \int_{\R^d} \frac{|u (x) - u (y)|^2}{|x-y |^{d+2s}} \; dy \; dx \\
	& \le C \int_{\Omega \setminus \Omega_{2\delta}} \int_{\R^d} \frac{|u (x) - u (y)|^2}{|x-y |^{d+2s}} \; dy \; dx < C \eps .
	\end{aligned} \end{equation}
	
	Substituting \eqref{eq:split-I3-patch} and \eqref{eq:split-I3-away} in \eqref{eq:split-I3}, we obtain
	\begin{equation} \label{eq:bound-I3}
	I_3 \le C \eps.
	\end{equation}
	
	Finally, collecting \eqref{eq:bound-I1}, \eqref{eq:bound-I2}, \eqref{eq:bound-I3} and \eqref{eq:splitting-X}, we conclude that
	\[
	|u - I_h u|_\X^2 \le C \eps + \mathcal{O}(1).
	\]
	The result follows because $\eps > 0$ is arbitrary.
\end{proof}

\fran{\begin{remark}[convergence rates under regularity assumptions]
If, besides the hypotheses from Theorem \ref{thm:convergence}, we assume that the solution $u$ belongs to $H^r_{loc}(\Rd)$ for some $r \in (s,2]$, then it is clear (cf. \eqref{eq:Hs-interpolation-error}) that the local interpolation error is of the order of $h^{r-s}$. Furthermore, if $u \in L^2_{s-\beta}(\R^d)$ for some $\beta \in (0,s)$ --which is guaranteed by Proposition \ref{prop:further_decay} as long as $g \in [L^2_{s+\beta}(\Omega^c)]'$-- then using \eqref{eq:decay_measure} we have
\[
\int_\Omega \int_{\Lambda_H^c} \frac{|u(y) - I_h u(y)|^2}{|x-y|^{d+2s}} \, dy dx \le C H^{-2\beta} \| u \|_{L^2_{s-\beta}(\R^d)}^2.
\]

Therefore, if we take the computational domain diameter so that it satisfies $H^{-\beta} \le C h^{r-s}$, namely $H \ge C h^{\frac{s-r}\beta}$, a direct calculation shows that we have convergence with order $h^{r-s}$ with respect to the mesh size:
\[
\| u - u_h \|_{\V} \le C h^{r-s}.
\]
\end{remark}
}

\section{Numerical experiments} \label{sec:experiments}
In this section we perform numerical experiments that illustrate the convergence of the finite element discretizations and the effect of truncating the computational domain. We also present an example in a two-dimensional setting in which the value of $s$ dictates the behavior of solutions at infinity. As an application of our finite element scheme, we discretize the heat equation for the fractional Laplacian and display the convergence  as $t \to \infty$ of the discrete solution towards the mean value of the initial condition.

\subsection{Explicit non-trivial solutions} \label{sec:solutions}

As we discussed in Remark \ref{rmk:sol-constante}, a trivial explicit solution of \eqref{eq:Neumann} can be obtained by taking $f \equiv \alpha$ and $g \equiv 0$. In such a case, the solution $u \equiv 1$ is approximated in an exact form by our numerical scheme. In order to test our method, we construct some non-trivial solutions as follows: assume that $w: \Rd \to \R$ is a solution of the nonhomogeneous Dirichlet problem
\begin{equation} \label{eq:Dirichlet}
\left\lbrace \begin{array}{rl}
\Laps w  = f_D & \mbox{in } \Omega, \\
w = h & \mbox{in } \Omega^c,
\end{array} \right.
\end{equation}
where $f_D$ and $h$ are some known functions. Then,  defining
\begin{equation*}
g(x) :=  h(x) \, C_{d,s}  \int_\Omega \frac{1}{|x-y|^{d+2s}} \, dy - C_{d,s} \int_\Omega \frac{w(y)}{|x-y|^{d+2s}} \, dy,
\end{equation*}      
for all $x \in \Omega^c$, and using relation  \eqref{eq:dato_afuera}, it follows that $w$ also solves
\begin{equation*} \label{eq:sol_expl}
\left\lbrace \begin{array}{rl}
\Laps u + \alpha u = f_D + \alpha w & \mbox{in } \Omega, \\
\Ns u = g & \mbox{in } \Omega^c.
\end{array} \right. 
\end{equation*}
Thus, we can construct explicit examples by building from known solutions of \eqref{eq:Dirichlet} for which the computation of $g$ can be numerically handled.     

\subsection{Convergence order} \label{sec:ej_1d}
Following the former ideas, we consider $\Omega = (-1,1)$, $\alpha = 1$, and
\begin{equation} 
w(x) = \left\lbrace \begin{array}{rl}
c_{s}(1-x^2)^s & \mbox{in } \Omega, \\
0 & \mbox{in } \Omega^c,
\end{array} \right.
\label{eq:sol_exp}
\end{equation}
with 
\[
c_s = \frac{ \sqrt{\pi} }{ 2^{2s}  \Gamma( \frac{1 + 2s}{2} ) \Gamma(1+s) }.
\]

This function $w$ is a well-known solution of \eqref{eq:Dirichlet} with $f_D \equiv 1$ and $h \equiv 0$. We thereby set $f = 1 + w$, and $g(x) = - C_{1,s}\int^1_{-1} \frac{w(y)}{|x-y|^{1+2s}} \, dy$ in the Neumann problem \eqref{eq:Neumann}. Note that here $C_{1,s}$ is the constant defined in \eqref{eq:defofLaps}. 

We point out that in this case the function $g$ has a singularity on $-1$ and $1$. More precisely, for $\delta > 0$, both $g(1 + \delta)$ and $g(-1-\delta)$ are of order $\mathcal{O}(\delta^{-s})$ near the interval endpoints (see \cite[Remark 5.2.5]{tesisbortha} for details). Thus, the nonlocal flux density satisfies $g \in L^2( \Omega^c )$ only when $s<1/2$ and, in this example, two numerical challenges arise in the assembly of the right hand side. Namely, the computation of $\int_{\Omega^c} g \phii_i$ when $\supp (\phii_i) \subset \Omega^c$ with $\supp( \phii_i ) \cap \partial \Omega \not = \emptyset$, and the computation of $\int_{\Omega^c} g \phii_{N+1}$, where $\phii_{N+1}$ is the constant basis function over $\Lambda^c_H$. In the first case we have to deal with a singular integrand, while in the second one we need to compute an integral over an unbounded domain.

Since $g(x) \simeq |x|^{-1-2s}$ for large values of $|x|$, the integral

$$\int_{\Omega^c} g \phii_{N+1} = \int^{+\infty}_{H} g(x) \, dx + \int^{-H}_{-\infty} g(x) \, dx = 2\int^{+\infty}_{H} g(x) \, dx,$$   
can be approximated by means of standard techniques. On the other hand, we deal with the first difficulty by a careful treatment of the singularity in order to avoid numerical issues. This is detailed in Appendix \ref{app:g_construida}.  

We display convergence orders for several values of $s$ in Figure \ref{fig:ordenes}. Because $g \not \in L^2(\Omega^c)$ for $s \geq 1/2$, we restrict ourselves to the range $s \leq 1/2$. Although we emphasize that the condition $H \to \infty$ as $h \to 0$ is needed in general, in these experiments the choice of $H = \mbox{diam}(\Lambda_H)$ does not seem to affect the convergence rate. This is possibly due to the fact that the solution $w$ is constant in $\Omega^c$ and therefore it can be exactly represented by the basis function $\phii_{N+1}$ on $\Lambda_H^c$.

\begin{figure}[h]
	\centering
	\begin{tabular}{cc}
		\subf{\includegraphics[width=50mm]{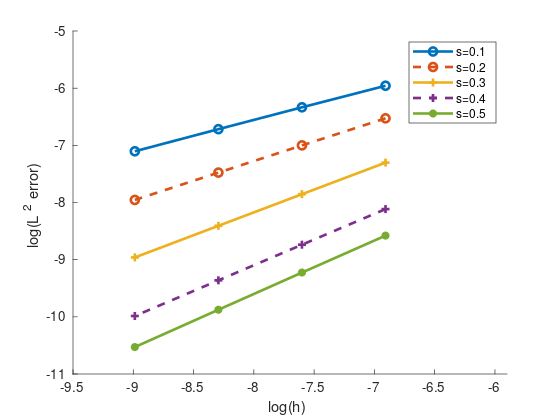}}
		{$\| \cdot \|_{L^2(\Omega)}$}
		&
		\subf{\includegraphics[width=50mm]{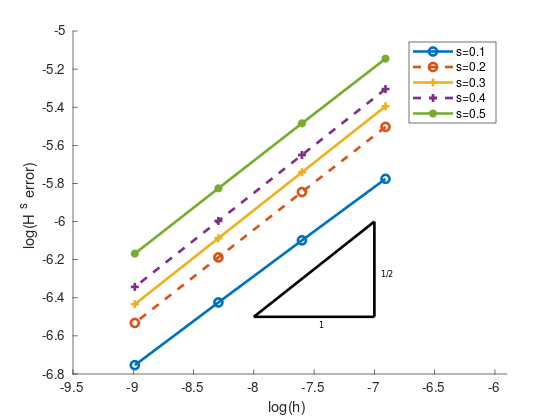}}
		{ $\| \cdot \|_{H^s(\Omega)}$}
		\\
	\end{tabular}
	\caption{The $L^2(\Omega)$ and $H^s(\Omega)$ errors in logarithmic scale for Example \ref{sec:ej_1d}, using several values of $s$. In these experiments we used  uniform meshes with $h = 1/1000,$ $1/2000,$ $1/4000,$ $1/8000,$ and $H = 1.2$. The observed order of convergence is approximately $s + 1/2$ and $1/2$ in the $L^2(\Omega)$ and $H^s(\Omega)$ norms, respectively. }
	\label{fig:ordenes}
\end{figure}

\subsection{Convergence in $H$} \label{sec:ej_R}
In this example we consider $\Omega = (-1,1)$, $f \equiv 1$, and $g(x) = -1/|x|^{1+p}$ for some $p>0$, and we aim to find experimental convergence rates in $H = \mbox{diam}(\Lambda_H)$, using a fixed uniform mesh with small $h$. We shall denote by $u^H_h$ the discrete solution computed on a mesh with size $h$ and a computational domain $\Lambda_H$. We are interested in the behavior of $\|u^{H_n}_h -u^{H_{n+1}}_h \|_{L^2(\Omega)}$, with $\{H_n\} \subset \R_+$ and $H_{n+1} - H_{n} \simeq k$ for some fixed constant $k>0$. Numerical results for $s = 0.3$, $s = 0.8$, and several choices of $g$ are shown in Figure \ref{fig:ordenes_R}. These experiments suggest that $\|u^{H_n}_h -u^{H_{n+1}}_h \|_{L^2(\Omega)} \lesssim H^{-c}$ for some $c>0$ depending on both $s$ and $g$. Table \ref{tab:ordenes} displays least-square fittings of the exponent $c$.       

\begin{figure}[h]
	\centering
	\begin{tabular}{cc}
		\subf{\includegraphics[width=50mm]{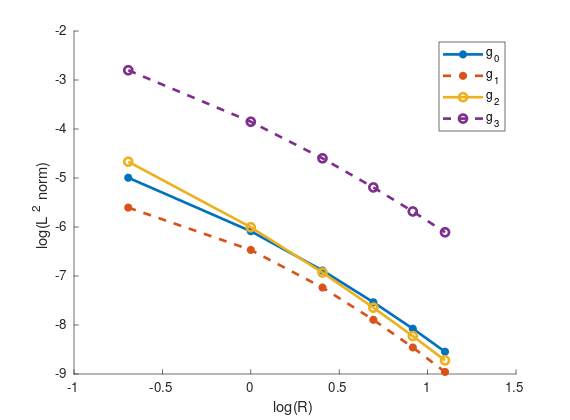}}
		{$s = 0.3$}
		&
		\subf{\includegraphics[width=50mm]{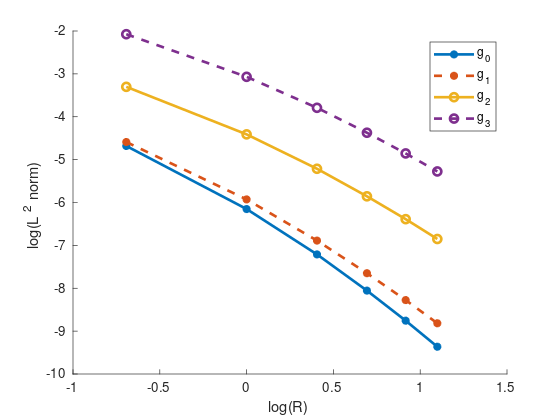}}
		{ $s = 0.8$}
		\\
	\end{tabular}
	\caption{Experimental results for Example \ref{sec:ej_R}. We plot $\log\big( \|u^{H_n}_h -u^{H_{n+1}}_h \|_{L^2(\Omega)} \big)$ vs $\log( H_{n+1} )$ for $s=0.3$ and $s=0.8$. In these experiments, we set $h = 1/1000$, $H =\{$ 0.1, 0.5, 1, 1.5, 2, 2.5, 3, 3.5$\}$, and the right hand sides $g_0 \equiv 0$, $g_1(x) = -|x|^{-2}$, $g_2(x) = -|x|^{-1.5}$, and $g_3(x) = -|x|^{-1.2}$. Also, we used $f \equiv 1$ for all cases except for $g_0$, where we took $f(x) = \sin(\pi x)$ in order to avoid  trivial constant solutions. }  
	\label{fig:ordenes_R}
\end{figure}

\begin{table}[]
	\begin{tabular}{|c|c|c|c|c|}
		\hline
		& $g_0 \equiv 0$ & $g_1(x) = -|x|^{-2}$ & $g_2(x) = -|x|^{-1.5}$ & $g_3(x) = -|x|^{-1.2}$ \\ \hline
		$s = 0.3$   & $2.96$  & $3.07$ & $3.20$  & $2.69$  \\ \hline
		$s = 0.8$   & $3.84$  & $3.44$ & $2.92$  & $2.68$ \\ \hline                                     
	\end{tabular}
	\caption{ Experimental convergence rates for Example \ref{sec:ej_R}. The asymptotic behavior (see Figure \ref{fig:ordenes_R}) suggests that $\|u^{H_n}_h -u^{H_{n+1}}_h \|_{L^2(\Omega)} \lesssim H^{-c}$, for some constant $c>0$ depending of $s$ and $g$. Here we show least-squares fittings of $c$ in these examples.}
	\label{tab:ordenes}
\end{table}

\subsection{Qualitative behavior in 2D} \label{sec:ej_2d}
In order to explore the qualitative behavior of 2D solutions, we set a 2-dimensional example with $\Omega = B(0,1)$, $g(x) = -1/|x|^3$, $f \equiv 2$, and $H = 2$. In this case, $\int_\Omega f = 2\pi = - \int_{\Omega^c} g$, and thus solutions have zero mean on $\Omega$. For the implementation of \eqref{eq:weak_discrete}, we modified the code given in \cite{ABB}. We give details on the implementation of this particular example in Appendix \ref{sec:implementation}.

Results for several values of $s$ on a quasi-uniform mesh with $\Lambda_{H} = B(0,3)$ are shown in Figure \ref{fig:ej_2d_todo}. In all cases, we obtained that the discrete solutions have zero average in $\Omega$, in agreement with Remark \ref{rmk:average_uh}. The solutions exposed in Figure \ref{fig:ej_2d_todo} have different asymptotic behaviors. According to Corollary \ref{cor:strong-decay}, since for $s=0.1$ we have $g(x)|x|^{2+2s} \to 0$ as $|x| \to \infty$, solutions vanish at infinity. On the other hand, this limit blows up for $s = 0.9$ and thus $u(x) \to -\infty$ in such a case. The transition between these two behaviors happens for $s=0.5$. With the notation from Remark \ref{rmk:asintotico}, we have $\kappa = -1$ and therefore $ u(x) \to -2$ as $|x| \to \infty$ because $C_{2,0.5} = 1/2\pi$ and $|\Omega| = \pi$.

\begin{figure}[htbp]
	\centering
	\begin{tabular}{ccc}
		\includegraphics[width=0.28\linewidth]{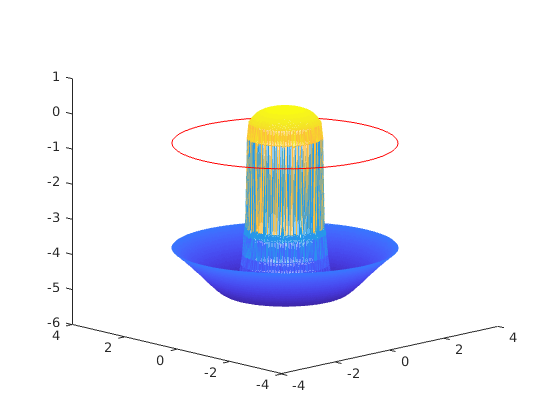}
		&
		\includegraphics[width=0.28\linewidth]{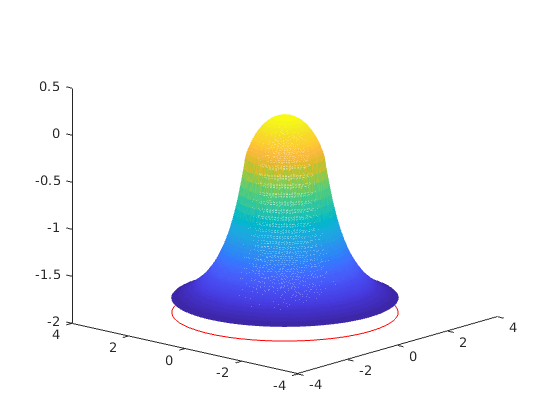}
		&
		\includegraphics[width=0.28\linewidth]{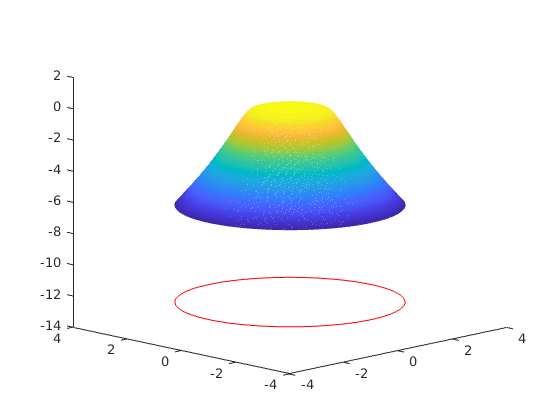}
		\\
		\subf{\includegraphics[width=0.28\linewidth]{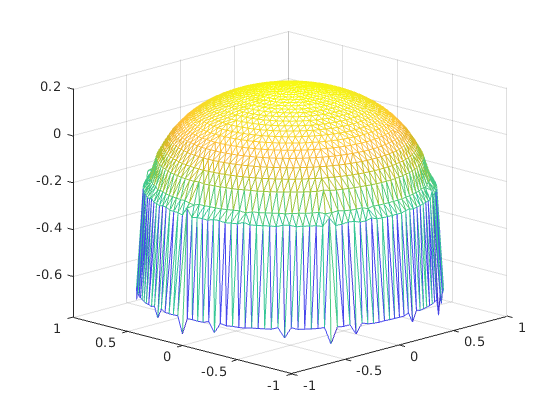}}
		{$s = 0.1$}
		&
		\subf{\includegraphics[width=0.28\linewidth]{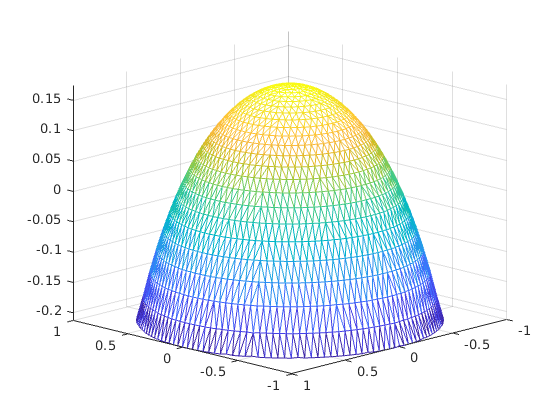}}
		{$s = 0.5$}
		&
		\subf{\includegraphics[width=0.28\linewidth]{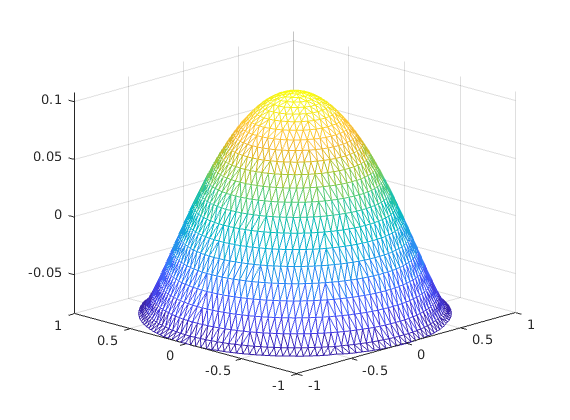}}
		{$s = 0.9$}
	\end{tabular}
	\caption{Results for the problem described in Section \ref{sec:ej_2d}, for several values of $s$ computed on a quasi-uniform mesh consisting of 32200 triangles on $\Lambda_{H}$. Top row: discrete solutions in $\Lambda_{H}$, with the value of the solution in $\Lambda_{H}^c$ represented by a red circle over $\partial \Lambda_{H}$. Bottom row: solutions in $\Omega$.} 
	\label{fig:ej_2d_todo}
\end{figure}

As an illustration of the method's ability to capture this phenomenon, Table \ref{tab:asintoticos} reports the values of $U_{N+1}= u_h \big|_{\Lambda_H^c}$ computed for three meshes $\calT_i$ ($i=1,2,3$). In all cases, $h = 5\times 10^{-2}$ in $\Omega$ and the meshes were graded in $\Omega^c$, so that the element sizes are proportional to $d(T, \Omega)^3$ for elements far away from $\Omega$. This way, the resulting computational domains $\Lambda_H$ corresponded to $H=64,216,512.$

\begin{table}[htbp]
	\begin{tabular}{|c|c|c|c|}
		\hline
		& $\calT_1$, $H = 64$ & $\calT_2$, $H = 216$ & $\calT_3$, $H=512$ \\ \hline
		$s = 0.1$ & $-0.0720$  & $-0.0283$ & $-0.0151$  \\ \hline
		$s = 0.5$ & $-2.0028$  & $-2.0029$ & $-2.0029$ \\ \hline
		$s = 0.9$ & $-158.33$  & $-419.04$ & $-835.83$  \\ \hline                                                                          
	\end{tabular}
	\caption{Values of discrete solutions at infinity for $s=0.1, 0.5, 0.9$ for meshes with different computational domains. The results are in good agreement with Corollary \ref{cor:strong-decay} and Remark \ref{rmk:asintotico}.}
	\label{tab:asintoticos}
\end{table}

\subsection{Fractional Heat Equation} \label{sec:ej_parabolico}
As a last example, we focus on the fractional heat diffusion problem with homogeneous Neumann condition \eqref{eq:Neumann_parabolico}. By combining scheme \ref{eq:weak_discrete} for the spatial discretization and a backward Euler time-stepping, we obtain the discrete problem: given $U^n_h$ ($n \in \{0,...,N-1\}$), find $U^{n+1}_h \in \V_h$ such that 
\begin{equation*} \label{eq:weak_discrete_parabolic}
\left(  \frac{U^{n+1}_h - U^{n}_h}{\delta t}  , v_h \right)_{L^2(\Omega)}  + \langle U^{n+1}_h , v_h \rangle_{\X} = 0  \quad \forall v_h \in \V_h.
\end{equation*}
Above $\delta t>0$ is a uniform time step, $\delta t = T/N$, and $U^{0}_h$ is a discretization of the initial condition $u_0$. Clearly, for every $n$, the equation above reduces to \eqref{eq:weak_discrete} with $f = U^n_h/\delta t$, $\alpha = 1/\delta t$, and $g \equiv 0$.

In our experiments we consider $\Omega = (-1 , 1)$ and $u_0(x) = I_{[-1/2,1/2]}(x)$. Numerical solutions for several values of $s$ are displayed in Figure \ref{fig:parabolico}. Additionally, according to \cite[Proposition 4.2.]{DiRoVa17}, for all $t>0$ we have $\|u - \frac{1}{|\Omega|}\int_{\Omega} u_0\|_{L^2(\Omega)} < A e^{-ct}$, for some positive constants $A$ and $c$. This exponential decay is also verified by our numerical solutions (see in Figure \ref{fig:parabolico2}).

\begin{figure}[h]
	\centering
	\begin{tabular}{ccc}
		\subf{\includegraphics[width=0.3\linewidth]{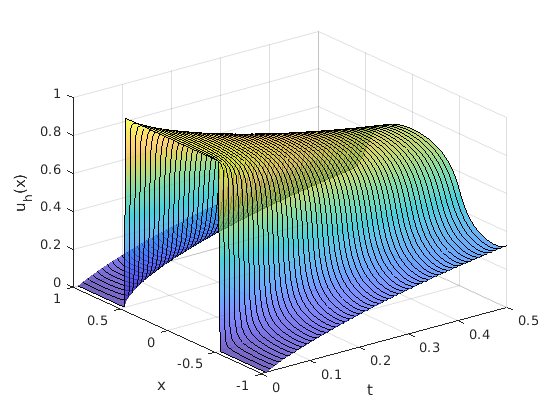}}
		{$s = 0.3$}
		&
		\subf{\includegraphics[width=0.3\linewidth]{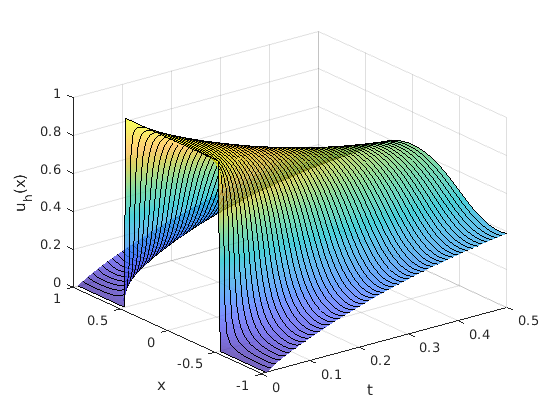}}
		{ $s = 0.5$}
		&
		\subf{\includegraphics[width=0.3\linewidth]{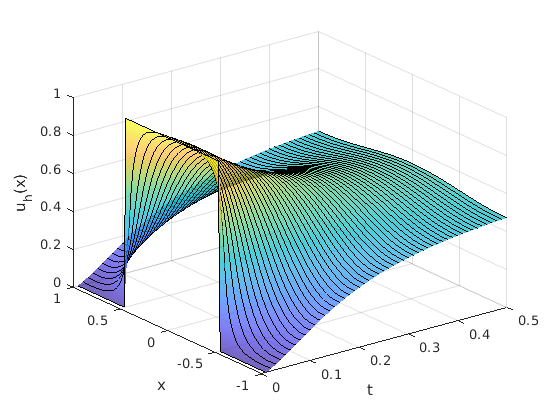}}
		{ $s = 0.8$}
		\\
	\end{tabular}
	\caption{Numerical solutions of Example \ref{sec:ej_parabolico} for several values of $s$. Here we set $\delta t = 0.01$, $h = 1/100$, and $H = 2$. As predicted in \cite{DiRoVa17}, solutions in $\Omega$ converge to the  constant $\frac{1}{|\Omega|}\int_{\Omega} u_0 = 0.5$ as $t \to \infty$.}  
	\label{fig:parabolico}
\end{figure}     

\begin{figure}[h]
	\centering
	\includegraphics[width=50mm]{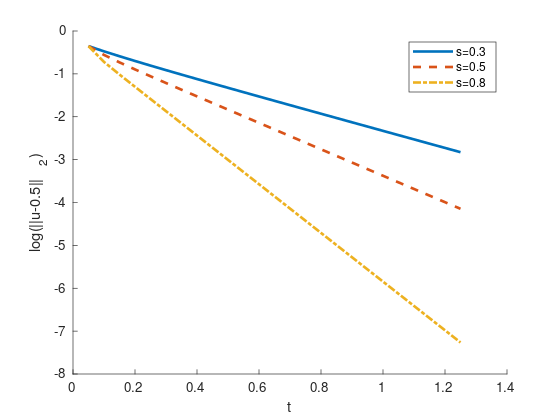}
	\caption{ Time evolution of $\log \big( \|u - \frac{1}{\Omega} \int_{\Omega} u_0 \|_{L^2(\Omega)} \big) = \log \big( \|u - 0.5 \|_{L^2(\Omega)} \big) $ in Example \ref{sec:ej_parabolico}. The linear relation between both quantities agrees with the exponential decay predicted in \cite{DiRoVa17}.} 
	\label{fig:parabolico2}
\end{figure}

\bibliographystyle{plain}
\bibliography{IMANUM-refs}

\clearpage

\appendix 

\section{Computing the right hand side in Example \ref{sec:ej_1d}} \label{app:g_construida}

In order to assemble the right hand side in Example \ref{sec:ej_1d}, we need to deal with the singularities of the flux density $g$ near $\partial \Omega$. Since we are using a regular mesh with element size $h$, this issue arises when computing   
\begin{equation}
\label{eq:singularidad}
\int_{T_i} g(x) \phii(x) \, dx,
\end{equation}
with $T_i = [1,1+h]$ or $T_i = [-1-h,-1]$. Due to the symmetry of the problem, we shall focus only on the first case. 

Indeed, consider the Lagrange basis function $\phii_j$ associated with the node $\x_j = 1$, namely, $\phii_j(x) = 1 - (x-1)/h$ for all $x \in [1,1+h]$. \fran{We recall the definitions \eqref{eq:sol_exp} and \eqref{eq:defofLaps} of the constants $c_s$ and $C_{1,s}$ respectively, so that their product equals $ c_s C_{1,s} = \frac{1}{\Gamma(1-s)\Gamma(s)}$, and} rewrite \eqref{eq:singularidad} as
\[ \begin{split}
\int_{T_i} g(x) \phii_j(x) \, dx & = \fran{-\frac{1}{\Gamma(1-s)\Gamma(s)}}\int_1^{1+h} \int^{1}_{-1} \frac{(1 - y^2)^s}{(x-y)^{1+2s}} \Big(1 - \frac{x-1}{h} \Big) \, dy \, dx  \\
&  = \fran{-\frac{2^{2s+1}h}{\Gamma(1-s)\Gamma(s)}} \int^{1}_{0}  \int^{1}_{0} \frac{\hat{y}^s (1 - \hat{y})^s}{(h\hat{x}+2\hat{y})^{1+2s}} \big(1 - \hat{x} \big) \, d\hat{y} \, d\hat{x}.  
\end{split} \]
We use that $|x-y| = (x-y)$ (because $x-y>0$ in $T_i \times \Omega$), and make the change of variables $(\hat{x}, \hat{y}) = ((x-1)/h, (1 - y)/2)$. Observing that the last integral is performed over $Q = (0,1)\times(0,1)$, we split the domain into two triangles and treat each part separately. Namely, defining
\[ \begin{split}
& D_1 := \{ (x,y) \in Q, \text{ such that } y \leq x \}, \\
& D_2 := \{ (x,y) \in Q, \text{ such that } x \leq y \}, 
\end{split} \]
we have $Q = D_1 \cup D_2$. We first analyze the integral over $D_1$.

Applying the Duffy-type transformation $T_1: Q \to D_1$, $T_1(\xi , \eta) \to (\xi, \xi \eta)$, we write
\begin{equation} \begin{split}
\label{eq:D1}
\iint_{D_1} \frac{\hat{y}^s (1 - \hat{y})^s}{(h\hat{x}+2\hat{y})^{1+2s}} \big(1 - \hat{x} \big) \, d\hat{y} \, d\hat{x} & =
\int^{1}_{0}  \int^{1}_{0} \frac{\xi^s \eta^s (1 - \xi \eta)^s}{(h\xi+2\xi \eta)^{1+2s}} \big(1 - \xi \big)\xi  \, d\eta \, d\xi \\
&   = \int^{1}_{0} \frac{ \eta^s}{ (h+2\eta)^{1+2s}} \Big( \int^{1}_{0} \frac{(1 - \xi \eta)^s (1 - \xi)}{\xi^s } \, d\xi \Big ) \, d\eta.
\end{split} \end{equation}
Let us focus on the inner singular integral. Defining
\begin{equation*}
I_1(\eta) := \int^{1}_{0} \frac{(1 - \xi \eta)^s(1 - \xi)}{\xi^s } \, d\xi,
\end{equation*}
and applying the change of variables $\xi = z^{1/(1-s)}$, we obtain
\begin{equation}
\label{eq:I_1}
I_1(\eta) = \frac{1}{1-s} \int^1_0 (1 - \eta z^{1/(1-s)} )^s (1 - z^{1/(1-s)}) \, dz.
\end{equation} 
Because the integrand is a smooth, bounded function, this expression can be accurately approximated using standard integration techniques for all $\eta \in [0,1]$, and therefore we are able to obtain good approximations of the integral in \eqref{eq:D1}.

In the same fashion, applying the transformation $T_2: Q \to D_2$, $T_2(\xi , \eta) \to (\xi \eta , \xi)$ we obtain
\begin{equation}
\label{eq:D2}
\iint_{D_2} \frac{\hat{y}^s (1 - \hat{y})^s}{(h\hat{x}-2\hat{y})^{1+2s}} \big(1 - \hat{x} \big) \, d\hat{y} \, d\hat{x} = \int^{1}_{0} \frac{ 1}{ (h\eta+2)^{1+2s}} \Big( \int^{1}_{0} \frac{(1 - \xi )^s (1 - \eta \xi)}{\xi^s } \, d\xi \Big ) \, d\eta.
\end{equation}
The function
\begin{equation}
\label{eq:I_2}
I_2(\eta) := \int^{1}_{0} \frac{(1 - \xi )^s(1 - \eta \xi)}{\xi^s } \, d\xi = \frac{1}{1-s} \int^1_0 (1 -  z^{1/(1-s)} )^s (1 - \eta z^{1/(1-s)}) \, dz,
\end{equation} 
where in the last equality we made a change of variables as in \eqref{eq:I_1}, can be accurately approximated by the same considerations as before. Finally, substituting \eqref{eq:D2} and \eqref{eq:I_1} in \eqref{eq:D1} and \eqref{eq:I_2} respectively, yields  
\begin{equation*}
\int_1^{1+h} g(x) \phii_j(x) \, dx = \fran{-C_{1,s}}c_s 2^{2s+1}h \int^{1}_{0} \frac{ \eta^s I_1(\eta)}{ (h+2\eta)^{1+2s}}  + 
\frac{ I_2 (\eta)}{ (h\eta+2)^{1+2s}}  \, d\eta,
\end{equation*}  
and standard numerical integration techniques can be applied in order to approximate the latter expression. 

The treatment of the other basis function on $T_i$, namely $\phii_j(x) = (x-1)/h$, can be handled in the same way. Following the former ideas, if we define
\begin{equation*}
I_{3}(\eta) := \int^{1}_{0} (1 - \xi \eta)^s \xi^{1-s} \, d\xi, \quad \mbox{and} \quad I_{4}(\eta) := \eta \int^{1}_{0} (1 - \xi )^s \xi^{1-s} \, d\xi,
\end{equation*}
we obtain
\begin{equation*}
\int_1^{1+h} g(x) \phii_2(x) \, dx = \fran{-C_{1,s}} c_s 2^{2s+1}h \int^{1}_{0} \frac{ \eta^s I_3(\eta)}{ (h+2\eta)^{1+2s}}  + 
\frac{ I_4 (\eta)}{ (h\eta+2)^{1+2s}}  \, d\eta.
\end{equation*}          
In this case, the functions $I_3$ and $I_4$ can be expressed in terms of beta functions: it holds that $I_3(\eta) = \eta^{s-2} B(\eta; 1-s , s )$ and $I_4(\eta) = \eta B(1-s,s)$.  

\section{Implementation details in 2D} \label{sec:implementation}

Implementing the scheme described in Section \ref{sec:FE} involves some computational challenges, such as the integration of singular functions or the computation of integrals over unbounded domains. However, many of these difficulties can be tackled using the same ideas displayed in \cite{ABB}. In this Appendix we report the modifications needed on the code given in that work in order to adapt it to our problem \footnote{A full version of this code is available on:  https://github.com/fbersetche/Finite-element-approximation-of-fractional-Neumann-problems.}.
 We shall make use of the same notation as in \cite{ABB}. To fix ideas, we restrict our attention to the setting in Example \ref{sec:ej_2d}.

\subsection{Assembling the stiffness matrix}

For the Dirichlet for the fractional Laplacian with homogeneous boundary conditions, reference \cite{ABB} uses an auxiliary domain --typically a ball-- to assemble the stiffness matrix \verb|K|. Namely, it computes interactions between basis functions supported in $\Omega$ and certain nodal basis functions supported in $\Omega^c$. We take advantage of this construction in our setting because it means we already have at hand the interactions between basis functions supported in $\Omega$ and the ones supported in the auxiliary domain $\Lambda_{H} \setminus \Omega$. 

Therefore, the missing entries in the stiffness matrix are the last row/column, that involves the interaction between the constant basis function $\phii_{N+1}$ and the remaining ones. Namely, we need to calculate
\[
K_{i, N+1} = \langle \phii_i, \phii_{N+1} \rangle_\X , \quad \mbox{for } i = 1, \ldots , N+1. 
\]
Splitting the integral in this bilinear form as in \cite[Section 3]{ABB} and using the fact that $\phii_{N+1} = \chi_{\Lambda^c_{H}}$, we realize we only need to compute, for every $T_l \subset \overline\Omega$, expressions of the form
\[
\iint_{T_l \times \Lambda^c_{H}} \frac{(\phii_i(x)-\phii_i(y))  (\phii_{N+1}(x)-\phii_{N+1}(y))}{|x-y|^{d+2s}} \, dx \, dy =  -\iint_{T_l \times \Lambda^c_{H}} \frac{\phii_i(x)}{|x-y|^{d+2s}} \, dx \, dy 
\]
for $i = 1, \ldots , N$ and 
\[ 
\iint_{T_l \times \Lambda^c_{H}} \frac{  (\phii_{N+1}(x)-\phii_{N+1}(y))^2}{|x-y|^{d+2s}} \, dx \, dy =  \iint_{T_l \times \Lambda^c_{H}} \frac{1  }{|x-y|^{d+2s}} \, dx \, dy.
\]

Because we need to compute integrals over unbounded domains, we use the function \verb|comp_quad| from \cite[Section A.5]{ABB} with a properly modified input. To this end, some modifications in the variable \verb|cphi| are needed: we compute two new auxiliary variables \verb|cphi2| and \verb|cphi3| by executing the following code after the one presented at the end of \cite[Section C.6]{ABB}: 

\begin{Verbatim}
local = cell(1,3);
local{1} = @(x,y) 1-x;
local{2} = @(x,y) x-y;
local{3} = @(x,y) y;
cphi2 = zeros(9,12);
cphi3 = zeros(9,12);
for i = 1:3
	for j = 1:3
		f1 = @(z,y) local{i}(z,y);                    
		cphi2( sub2ind([3 3], i , j) , : ) =...
		f1( p_T_12(:,1) , p_T_12(:,2) ).*w_T_12;
	end
end
for i = 1:3
	for j = 1:3
		f1 = @(z,y) -1;                    
		cphi3( sub2ind([3 3], i , j) , : ) =...
		f1( p_T_12(:,1) , p_T_12(:,2) ).*w_T_12;
	end
end
\end{Verbatim}
Above, \verb|p_T_12| and \verb|w_T_12| are the quadrature points and their respective weights (see \cite[Appendix C]{ABB}). The variables \verb|cphi2| and \verb|cphi3| play the same role as \verb|cphi|. Thus, we need to execute the former code only once and save the auxiliary variables in order to load them latter in the MATLAB workspace, before the execution of the main code. 

The main code is modified as follows.
\begin{itemize}
	\item Replace line 9 with:
	
	\begin{Verbatim}
	K  = zeros(nn+1,nn+1);
	
	\end{Verbatim} 
	
	\item Between lines 55 and 56 add the following:
	
	\begin{Verbatim}
JC = comp_quad(Bl,xl(1),yl(1),s,cphi2,R,area(l),p_I,w_I,p_T_12);
K(nodl, nn + 1) = K(nodl, nn + 1) + JC(:,1);
K(nn + 1, nodl) = K(nn + 1, nodl) + ( JC(:,1) )';
JC2 = comp_quad(Bl,xl(1),yl(1),s,cphi3,R,area(l),p_I,w_I,p_T_12);
K(nn + 1, nn + 1) = K(nn + 1, nn + 1) + JC2(1,1); 
	
	\end{Verbatim} 
	
\end{itemize}      
Note that above $\verb|R| = \mbox{diam}(\Lambda_H) = H$; we named the variable in such a way in order to be consistent with the notation from \cite{ABB}. 

\subsection{Computing the right hand side and solving the system}
Let $g$ be the Neumann datum. We need to compute 
\begin{equation*}
\int_{\R^2} \phii_i(x) g(x) \, dx, \quad \mbox{for } i = 1, \ldots, N+1.
\end{equation*} 
In Example \ref{sec:ej_2d} we have $g(x) = -1/|x|^3$. In particular, we have
\begin{equation*}
\verb|b(nn+1,1)| = \int_{\R^2} \phii_{N+1}(x) g(x) \, dx =  -\int_{\Lambda^c_{H}} \frac{1}{|x|^3} \, dx = 
-2\pi/H.
\end{equation*} 

Therefore, we modify the main code as follows to compute the right hand side in \eqref{eq:weak_discrete}. 
\begin{itemize}
	\item Define the function $f$ in $\Omega$ and $g$ in $\Lambda_{H} \setminus \Omega$, for example, after the definition of $f$. That is, overwrite line 4 with:
	\begin{Verbatim}
f = @(x,y) 2;
g = @(x,y) -1./( sqrt( x.^2 + y.^2 ) ).^3;
	
	\end{Verbatim} 
	\item Replace line 10 by:
	\begin{Verbatim}
b  = zeros(nn+1,1);
	
	\end{Verbatim}
	
	\item Comment the last two lines at the end of the main loop, and add: 
	\begin{Verbatim}
for l=nt-nt_aux+1:nt
nodl = t(l,:);
xl = p(1 , nodl); yl = p(2 , nodl);
b(nodl) = b(nodl) + fquad(area(l),xl,yl,g);
end 
b(nn+1,1) = -2*pi/R;	
	\end{Verbatim} 
	
\end{itemize}      

Besides modifying the right hand side, we need to incorporate the mass matrix and modify the system matrix accordingly. The former task is straightforward:
\begin{Verbatim}
M = zeros(nn+1,nn+1); 
for l=1:nt-nt_aux
	nodl = t(l,:);
	M(nodl,nodl) = M(nodl,nodl) + (area(l)/12).*( ones(3) + eye(3) );
end
\end{Verbatim} 

As for the second task, we set the variable $\verb|alpha| = \alpha$ as in \eqref{eq:Neumann} (here we use $\alpha=1$), and set and solve the linear system: 
\begin{Verbatim}
alpha = 1;
K = K.*cns;
uh = (K + alpha.*M)\b; 
\end{Verbatim}

Finally, we add the following lines to plot the discrete solution: 
\begin{Verbatim}
theta = 0:(2*pi)/100:2*pi;
xx = R.*cos(theta);
yy = R.*sin(theta);
zz = uh(nn+1).*ones(size(theta));
hold on
trimesh(t(1:nt , :), p(1,:),p(2,:),uh(1:end-1)); 
plot3(xx, yy, zz , '-or')
hold off
figure
trimesh(t(1:nt - nt_aux, :), p(1,:),p(2,:),uh(1:end-1)); 
\end{Verbatim}

We point out that this code returns two figures as output: the first one displays the solution in $\Lambda_{H}$, and a red circle over $\pp\Lambda_{H}$ represents the value of the numerical solution in $\Lambda^c_{H}$, as in the top row in Figure \ref{fig:ej_2d_todo}. The second figure shows the solution in $\Omega$, as in the bottom row in the same figure.    

\section*{Acknowledgements}

FMB has been supported by a PEDECIBA postdoctoral fellowship, and by ANPCyT under grant  PICT 2018 - 3017. \fran{JPB has been supported by a Fondo Vaz Ferreira grant 2019-068.}


\end{document}